\gdef\@linkcolor{cupred}
\gdef\@citecolor{cupred}
\newskip\belowtitleskip
\newskip\beforehistoryskip
\newskip\beforededicationskip
\newskip\aboveabsskip
\newskip\belowabsskip
\def\C{\mathbb C}
\def\R{\mathbb R}
\def\X{\mathbb X}
\def\U{\mathbb U}
\def\V{\mathbb U_B}
\def\W{\mathbb U_\infty}
\def\Y{\mathbb Y}
\newcommand{\om}{\omega}
\newcommand{\la}{\lambda}
\newcommand{\sub}{\subseteq}
\newtheorem{theorem}{Theorem}[section]
\newtheorem{lemma}[theorem]{Lemma}
\theoremstyle{definition}
\newtheorem{definition}[theorem]{Definition}
\newtheorem{example}[theorem]{Example}
\newtheorem{proposition}[theorem]{Proposition}
\newtheorem{corollary}[theorem]{Corollary}
\theoremstyle{remark}
\numberwithin{equation}{section}
\begin{document}

\title[Existence of Doubly-Weighted Pseudo Almost Periodic Solutions]{Existence of Doubly-Weighted Pseudo Almost Periodic Solutions to Some Classes of Nonautonomous \\Differential Equations}

\author{Toka Diagana}
\address{Department of Mathematics, Howard University, 2441 6th Street N.W., Washington, D.C. 20059, USA}
\email{tdiagana@howard.edu}


\subjclass[2000]{primary 35B15; secondary 34D09; 58D25; 42A75; 37L05}



\keywords{weight, weighted pseudo-almost periodic, doubly-weighted Bohr spectrum, almost periodic, doubly-weighted pseudo-almost
periodic.}

\begin{abstract} 
The main objective of this paper is twofold. We first show that if the doubly-weighted Bohr spectrum of an almost periodic function exists, then it is either empty or coincides with the Bohr spectrum of that function. Next, we investigate the problem which consists of the
existence of doubly-weighted pseudo-almost periodic solutions to some nonautonomous abstract differential equations. 
\end{abstract}

\maketitle

\section{Introduction}
Motivated by the functional structure of the so-called weighted Morrey spaces \cite{KA}, in Diagana \cite{DW}, a new concept called
doubly-weighted pseudo-almost periodicity, which generalizes in a natural fashion the notion of weighted pseudo-almost periodicity is introduced and studied.  
Among other things, in \cite{DW},  properties of these new functions have been studied including the stability of the convolution operator, the translation-invariance, the existence of a doubly-weighted mean for almost periodic functions under some reasonable assumptions, the uniqueness of the decomposition involving these new functions as well as some results on the composition of these new functions have been studied.

The main objective of this paper is twofold. We first show if the doubly-weighted Bohr spectrum of an almost periodic function exists, then it is either empty or coincides with the Bohr spectrum of that function. Next, we investigate the problem which consists of the
existence of doubly-weighted pseudo-almost periodic solutions to the nonautonomous abstract differential equations
\begin{eqnarray}\label{2}
u'(t) =
A(t) u(t) + g(t, u(t)),\; \; t \in \R,
\end{eqnarray}
where $A(t)$ for $t\in \R$ is a family of closed linear operators
on $D(A(t))$ satisfying the well-known Acquistapace and Terreni
conditions, and $g: \R \times \X \mapsto
\X$ is doubly weighted pseudo-almost periodic in $t \in \R$ uniformly in the
second variable. 

It is well-known that in that case, there exists
an evolution family ${\mathcal U} =\{U(t,s)\}_{t \geq s}$
associated with the family of linear operators $A(t)$. Assuming that the
evolution family ${\mathcal U} =\{U(t,s)\}_{t \geq s}$ is exponentially
dichotomic and under some additional assumptions it
will be shown that Eq. \eqref{2} has a unique doubly-weighted pseudo-almost
periodic solution.

The existence of weighted pseudo-almost periodic, weighted pseudo-almost automorphic,
and pseudo-almost periodic solutions to differential equations constitutes
one of the most attractive topics in qualitative theory of
differential equations due to possible applications. Some
contributions on weighted pseudo-almost periodic functions, their extensions, and their applications on differential equations
have recently been
made, among them are for instance \cite{AG1}, \cite{BB}, \cite{BE}, \cite{BE2}, \cite{DMN}, \cite{TO}, \cite{TOK}, \cite{DD}, \cite{liang},
\cite{L}, \cite{LS}, \cite{z4}, and \cite{LI} and the references therein.
However, the problem which consists of the
existence of doubly-weighted pseudo-almost periodic(mild) solutions to evolution
equations in the form Eq.~(\ref{2}) is quite new and
untreated and thus constitutes one of the main
motivations of the present paper.

The paper is organized as follows: Section 2 is devoted to
preliminaries results related to the existence of an evolution
family, intermediate spaces, properties of weights, and basic definitions and results on the
concept of doubly-weighted pseudo-almost periodic functions.
Section 3 is devoted to the existence of a doubly-weighted Bohr spectral theory for almost periodic functions while Section 4 is devoted to the existence of doubly-weighted pseudo-almost periodic solutions to Eq.~(\ref{2}).

\section{Preliminaries}
Let $(\X, \|\cdot\|)$ be a Banach space. If $C$ is a linear operator on $\X$, then
$D(C)$, $\rho(C)$, and $\sigma(C)$  stand respectively for the domain,
resolvent, and spectrum of $C$. Similarly, one sets $R(\lambda, C) := (\lambda I - C)^{-1}$ for all
$\la \in \rho(C)$ where $I$ is the identity operator for $\X$. 
Furthermore, we set $Q=I-P$ for a projection
$P$. We denote the Banach algebra of bounded linear operators on $\X$ equipped with its natural norm by $B(\X)$.

If $\Y$ is another Banach space, we then let $BC(\R , \X)$ (respectively, $BC(\R \times \Y, \X)$)
denote the collection of all $\X$-valued bounded continuous
functions and equip it with the sup norm (respectively, the space of jointly bounded continuous
functions $F: \R \times \Y \mapsto \X$).  

The space $BC(\R, \X)$
equipped with the sup norm is a
Banach space. Furthermore, $C(\R, \Y)$ (respectively, $C(\R \times
\Y, \X)$) denotes the class of continuous functions from $\R$ into
$\Y$ (respectively, the class of jointly continuous functions $F:
\R \times \Y \mapsto \X$).

\subsection{Evolution Families}\label{EF}
The setting of this Subsection follows that of Baroun {\it et al.} \cite{W} and Diagana \cite{TOK}. 
Fix once and for all a Banach space $(\X, \|\cdot\|)$.
\begin{definition}\label{DEF} A family of closed linear operators
$A(t)$ for $t\in \R$ on $\X$ with domain $D(A(t))$ (possibly not
densely defined) satisfy the so-called Acquistapace and Terreni
conditions, if there exist constants $\omega\in \R$, $\theta
\in (\frac{\pi}{2},\pi)$, $L > 0$ and $\mu, \nu \in (0,
1]$ with $\mu + \nu > 1$ such that
\begin{equation}\label{AT1}
  \Sigma_\theta \cup \{0\} \subset \rho(A(t)-\om) \ni \la,\;\qquad
  \|R(\la,A(t)-\om)\|\le \frac{K}{1+|\la|} \ \ \ \mbox{for all} \ t \in \R, \end{equation}
   and
\begin{equation}\label{AT2}
\|(A(t)-\om)R(\la,A(t)-\om)\,[R(\om,A(t))-R(\om,A(s))]\|
  \le L\, \frac{|t-s|^\mu}{|\la|^{\nu}}
  \end{equation}
for $t,s\in\R$, $\displaystyle \la \in\Sigma_\theta:=
\{\la\in\C\setminus\{0\}: |\arg \la|\le\theta\}$.
\end{definition}

For a given family of linear operators $A(t)$, the existence of an evolution family associated with it is not always guaranteed. However, if $A(t)$ satisfies Acquistapace-Terreni, then there exists a unique evolution
family $${\mathcal U}= \{U(t,s): t, s \in \R \ \ \mbox{such that} \ \ t \geq s\}$$ on $\X$ associated with $A(t)$ 
such that $U(t, s)\X \sub D(A(t))$ for all $t, s \in \R$ with $t \geq s$,
and

\begin{enumerate}
\item[(a)]  $U(t,s)U(s,r)=U(t,r)$ for $t,s \in \R$ such that $t \geq s \geq s$;

\item[(b)] $U(t,t)=I$ for $t \in \R$ where $I$ is the identity operator of $\X$;

\item[(c)] $(t,s)\mapsto U(t,s)\in B(\X)$ is continuous for $t>s$;

\item[(d)] $U(\cdot,s)\in C^1((s,\infty),B(\X))$, $\displaystyle
\frac{\partial U}{\partial t}(t,s) =A(t)U(t,s)$ and
\begin{align*}\label{au}
  \left\|A(t)^k U(t,s)\right\|&\le K\,(t-s)^{-k}
  \end{align*}
for $0< t-s\le 1$ and $k=0,1$.
\end{enumerate}

\begin{definition}
An evolution family ${\mathcal U} = \{U(t,s): t, s \in \R \ \ \mbox{such that} \ \ t \geq s\}$ is said to have an {\it exponential
  dichotomy} (or is {\it hyperbolic}) if there are projections
$P(t)$ ($t\in\R$) that are uniformly bounded and strongly
continuous in $t$ and constants $\delta>0$  and $N\ge1$ such that
\begin{enumerate}
\item[(e)] $U(t,s)P(s) = P(t)U(t,s)$; \item[(f)] the restriction
$U_Q(t,s):Q(s)\X\to Q(t)\X$ of $U(t,s)$ is
  invertible (we then set $\widetilde{U}_Q(s,t):=U_Q(t,s)^{-1}$); and
\item[(g)] $\left\|U(t,s)P(s)\right\| \le Ne^{-\delta (t-s)}$ and
  $\left\|\widetilde{U}_Q(s,t)Q(t)\right\|\le Ne^{-\delta (t-s)}$ for $t\ge s$ and $t,s\in \R$.
\end{enumerate}
\end{definition}

This setting requires some estimates related to ${\mathcal U} =\{U(t,s)\}_{t \geq s}$. For that, we
introduce the interpolation spaces for $A(t)$. 

Let $A$ be a sectorial operator on $\X$ (in Definition \ref{DEF}, replace
$A(t)$ with $A$) and let $\alpha\in(0,1)$. Define
the real interpolation space
$$\displaystyle \X^A_{\alpha}: = \Big\{x\in \X: \|x\|^A_{\alpha}:=
\sup\nolimits_{r>0}
\|r^{\alpha}(A-\omega)R(r,A-\omega)x\|<\infty
\Big\},
$$ which, by the way, is a Banach space when endowed with the norm $\|\cdot
\|^A_{\alpha}$. For convenience we further write
$$\X_0^A:=\X,
\ \|x\|_0^A:=\|x\|, \ \X_1^A:=D(A)$$ and
$\|x\|^A_{1}:=\|(\omega-A)x\|$. Moreover, let
$\hat{\X}^A:=\overline{D(A)}$ of $\X$.

\begin{definition}
Given a family of linear operators $A(t)$ for $t\in \R$ satisfying the Acquistapace-Terreni conditions, we set
$\X^t_\alpha:=\X_\alpha^{A(t)}$ and $
\hat{\X}^t:=\hat{\X}^{A(t)}$ for $0\le \alpha\le 1$ and
$t\in\R$, with the corresponding norms. 
\end{definition}

\begin{proposition}\cite{W}\label{pes}
For $x \in \X$, $ 0\leq \alpha \leq 1$ and $t > s,$ the following
hold:

\begin{enumerate}
\item[(i)] There is a constant $c(\alpha),$ such that 
 \begin{equation}\label{eq1.1}
  \|U(t,s)P(s)x\|_{\alpha}^t\leq
 c(\alpha)e^{- \frac{\delta}{2}(t-s)}(t-s)^{-\alpha} \|x\|.
  \end{equation}
\item[(ii)] There is a constant $m(\alpha),$ such that

 \begin{equation}\label{eq2.1}
 \|\widetilde{U}_{Q}(s,t)Q(t)x\|_{\alpha}^s\leq
 m(\alpha)e^{-\delta (t-s)}\|x\|, \qquad t \leq s.
 \end{equation}
 \end{enumerate}

\end{proposition}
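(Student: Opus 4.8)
The plan is to reduce both estimates to a single interpolation inequality and then feed in the dichotomy bound (g) and the smoothing bound (d). First I would record the elementary inequality
$$\|y\|_\alpha^t \le c(\alpha)\,\|y\|^{1-\alpha}\,\|(A(t)-\omega)y\|^\alpha \qquad (y \in D(A(t))),$$
valid uniformly in $t$. To obtain it, fix $r>0$ and use the two identities $(A(t)-\omega)R(r,A(t)-\omega)y = R(r,A(t)-\omega)(A(t)-\omega)y$ and $(A(t)-\omega)R(r,A(t)-\omega) = rR(r,A(t)-\omega)-I$. Combined with the resolvent bound \eqref{AT1}, these give the two competing estimates $\|(A(t)-\omega)R(r,A(t)-\omega)y\| \le (K+1)\|y\|$ and $\le \tfrac{K}{1+r}\|(A(t)-\omega)y\|$; multiplying by $r^\alpha$, using the first on small $r$ and the second on large $r$, and balancing at $r_0 = \|(A(t)-\omega)y\|/\|y\|$ yields the claim with $c(\alpha)$ independent of $t$.

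For (i) I would set $y = U(t,s)P(s)x$ and split according to the size of $t-s$. The factor $\|y\|$ is handled directly by (g), namely $\|U(t,s)P(s)x\| \le Ne^{-\delta(t-s)}\|x\|$; the delicate factor is $\|(A(t)-\omega)y\|$. When $0 < t-s \le 1$, write $s' = (t+s)/2$ and use the evolution law (a) together with the commutation (e) to factor $U(t,s)P(s) = U(t,s')P(s')\,U(s',s)P(s)$; then (d) gives $\|(A(t)-\omega)U(t,s')\| \lesssim (t-s')^{-1} = 2(t-s)^{-1}$ (absorbing the $|\omega|$ term since $(t-s')^{-1}\ge 1$), while (g) controls $\|U(s',s)P(s)x\| \le Ne^{-\delta(t-s)/2}\|x\|$. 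Inserting both into the interpolation inequality produces a bound of the form $(t-s)^{-\alpha}e^{-\delta(t-s)(1-\alpha/2)}\|x\|$, and since $1-\alpha/2\ge 1/2$ this is dominated by the asserted $c(\alpha)(t-s)^{-\alpha}e^{-\frac{\delta}{2}(t-s)}\|x\|$. When $t-s>1$, I would peel off a unit interval, $U(t,s)P(s)x = U(t,t-1)P(t-1)\,U(t-1,s)P(s)x$, apply the previous $t-s\le 1$ analysis to the first factor acting on $z=U(t-1,s)P(s)x$ and the decay of (g) to $z$; the resulting bound $\lesssim e^{-\delta(t-s)}\|x\|$ is reabsorbed into $c(\alpha)(t-s)^{-\alpha}e^{-\frac{\delta}{2}(t-s)}\|x\|$ because $\sup_{\tau\ge 1}\tau^\alpha e^{-\delta\tau/2}<\infty$.

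For (ii) I would argue symmetrically on the unstable part, applying the interpolation inequality at time $s$ to $y = \widetilde{U}_Q(s,t)Q(t)x$. The norm factor $\|y\|$ is again immediate from (g). The point is to bound $\|(A(s)-\omega)\widetilde{U}_Q(s,t)Q(t)x\|$; here, unlike the stable case, there is no incoming singularity, so one obtains a clean bound $\lesssim e^{-\delta(t-s)}\|x\|$ and no $(t-s)^{-\alpha}$ factor survives. I expect the main obstacle to be precisely this regularity statement for the inverse family: property (d) is formulated only for the forward evolution $U(t,s)$, so one must first transfer a smoothing estimate to $\widetilde{U}_Q(s,t)Q(t)$ using the invertibility (f), the group structure on $Q(\cdot)\X$, and the differentiability in (d). The careful bookkeeping of the exponential rates — ensuring that the composite decay never falls below $\delta$ (resp. $\delta/2$) after the half-step splitting — is the other place where attention is needed.
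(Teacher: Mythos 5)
The paper itself contains no proof of this proposition: it is quoted verbatim from Baroun \emph{et al.}\ \cite{W}, so your attempt can only be measured against the standard argument in that reference and in Maniar--Schnaubelt \cite{Man-Schn}. For part (i) your proof is essentially that argument and it is correct: the moment inequality $\|y\|_\alpha^t\le c(\alpha)\,\|y\|^{1-\alpha}\|(A(t)-\omega)y\|^{\alpha}$ does follow from \eqref{AT1} by balancing the two resolvent estimates exactly as you describe; the midpoint factorization $U(t,s)P(s)=U(t,s')P(s')U(s',s)P(s)$ with $s'=(t+s)/2$ legitimately combines the smoothing bound (d) at the later endpoint with the dichotomy decay (g) over $[s,s']$; and the exponent bookkeeping ($1-\alpha/2\ge 1/2$ for $t-s\le 1$, and $\sup_{\tau\ge 1}\tau^{\alpha}e^{-\delta\tau/2}<\infty$ for $t-s>1$) is right.

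Part (ii), however, is not yet a proof. You correctly identify that the entire content of \eqref{eq2.1} is the uniform bound $\|(A(s)-\omega)\widetilde{U}_Q(s,t)Q(t)x\|\le C e^{-\delta(t-s)}\|x\|$, and then you defer it (``one must first transfer a smoothing estimate\dots'') without carrying out the transfer. The missing step is short but it is the whole point: since $U_Q(s,s-1)\widetilde{U}_Q(s-1,s)$ is the identity on $Q(s)\X$ and $\widetilde{U}_Q(s,t)Q(t)x\in Q(s)\X$, one may write $\widetilde{U}_Q(s,t)Q(t)x=U(s,s-1)\,\widetilde{U}_Q(s-1,t)Q(t)x$. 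Now (d) over the \emph{fixed unit interval} $[s-1,s]$ gives $\|(A(s)-\omega)U(s,s-1)\|\le K(1+|\omega|)$ with no singular factor, while (f) and (g) give $\|\widetilde{U}_Q(s-1,t)Q(t)x\|\le N e^{-\delta}e^{-\delta(t-s)}\|x\|$. In other words, the regularization for the backward evolution is \emph{borrowed from the forward evolution} over an interval of fixed length one --- which is precisely why no factor $(t-s)^{-\alpha}$ survives in \eqref{eq2.1}. Feeding this bound and $\|\widetilde{U}_Q(s,t)Q(t)x\|\le Ne^{-\delta(t-s)}\|x\|$ into your interpolation inequality at time $s$ closes the argument. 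Without this step, your part (ii) is an accurate description of what must be proved rather than a proof of it.
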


\subsection{Properties of Weights} This subsection is similar to the one given in Diagana \cite{DW} except that most of all the proofs will be omitted.

Let $\U$ denote the collection of functions (weights)
$\rho: \R \mapsto (0, \infty)$, which are locally integrable over
$\R$ such that $\rho > 0$ almost everywhere.

In the rest of the paper, if $\mu \in \U$, $T
> 0$, and $a \in \R$, we then set $Q_T := [-T, T]$, $Q_T + a := [-T+a, T+a]$, and 
$$\mu(Q_T) := \int_{Q_T} \mu(x) dx.$$

Here as in the particular case when $\mu(x) = 1$ for each $x \in \R$,
we are exclusively interested in the weights $\mu$ for which,
$$\displaystyle \lim_{T \to \infty} \mu(Q_T) = \infty.$$ Consequently, we define the
space of weights $\W$ by
$$\W : = \Bigg\{ \mu \in \U: \ \inf_{x\in \R} \mu(x) = \mu_0 > 0 \ \ \mbox{and} \ \ \lim_{T \to \infty} \mu(Q_T) = \infty\Bigg\}.$$

In addition to the above, we define the set
of weights $\V$ by
$$\displaystyle \V := \Bigg\{\mu \in \W: \ \sup_{x \in \R} \mu(x) = \mu_1 < \infty \Bigg\}.$$

We also need the following set of weights, which makes the spaces of weighted pseudo-almost periodic functions translation-invariant, 

$$\displaystyle \W^{\rm Inv} := \Bigg\{\mu \in \W: \ \lim_{x \to \infty} \frac{\mu(x+\tau)}{\mu(x)} < \infty \ \ \mbox{and} \ \ \lim_{T \to \infty} \frac{\mu(Q_{T+\tau})}{\mu(Q_T)} < \infty \ \mbox{for all} \ \tau \in \R\Bigg\}.$$

Let $\W^c$ denote the collection of all continuous functions (weights) $\mu: \R \mapsto (0, \infty)$ such that $\mu > 0$ almost everywhere.

Define

$$\displaystyle \W^s := \Bigg\{\mu \in \W^c \cap \W: \ \lim_{x \to \infty} \frac{\mu(x+\tau)}{\mu(x)} < \infty \ \ \mbox{for all} \ \tau \in \R\Bigg\}.$$

\begin{lemma}\cite[Diagana]{DW}\label{TL} The inclusion $\W^s \subset \W^{\rm Inv}$ holds.
\end{lemma}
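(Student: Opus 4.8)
The plan is to fix $\mu \in \W^s$ and verify the two requirements in the definition of $\W^{\rm Inv}$. Since $\W^s \subset \W^c \cap \W \subset \W$, membership $\mu \in \W$ is automatic, and the first (pointwise) requirement $\lim_{x\to\infty}\mu(x+\tau)/\mu(x) < \infty$ for every $\tau \in \R$ is built into the definition of $\W^s$. Hence the whole content of the lemma is to deduce the integral requirement $\lim_{T\to\infty}\mu(Q_{T+\tau})/\mu(Q_T) < \infty$ from the pointwise one.

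Fix $\tau \in \R$; I may assume $\tau \ge 0$, the case $\tau < 0$ giving a ratio eventually at most $1$ since then $Q_{T+\tau} \subset Q_T$ and $\mu > 0$. First I would record what finiteness of the pointwise limit provides: there are constants $M_\tau > 0$ and $x_0 > 0$ with $\mu(x+\tau) \le M_\tau\, \mu(x)$ whenever $|x| \ge x_0$. Writing $Q_{T+\tau} = Q_T \cup [T, T+\tau] \cup [-(T+\tau), -T]$, I would decompose
\[
\mu(Q_{T+\tau}) = \mu(Q_T) + \int_{T}^{T+\tau}\mu(x)\,dx + \int_{-(T+\tau)}^{-T}\mu(x)\,dx,
\]
so that everything reduces to estimating the two boundary strips of fixed width $\tau$ located near $\pm T$.

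The key step is to bound each strip by a constant multiple of $\mu(Q_T)$, by converting it through a shift into the pointwise ratio already under control. For the right strip, for $T$ large and $x \in [T, T+\tau]$ one has $x-\tau \ge x_0$, hence $\mu(x) = \mu((x-\tau)+\tau) \le M_\tau\, \mu(x-\tau)$; integrating and substituting $u = x-\tau$ gives $\int_{T}^{T+\tau}\mu(x)\,dx \le M_\tau \int_{T-\tau}^{T}\mu(u)\,du \le M_\tau\, \mu(Q_T)$, the last step because $[T-\tau, T] \subset Q_T$ and $\mu > 0$. The left strip is handled symmetrically, using the pointwise estimate with the shift $-\tau$ — available since the defining condition of $\W^s$ holds for every real shift — at the left end. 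Combining, $\mu(Q_{T+\tau}) \le (1 + 2M_\tau)\,\mu(Q_T)$, whence $\mu(Q_{T+\tau})/\mu(Q_T) \le 1 + 2M_\tau < \infty$ for all large $T$, which yields the integral requirement and finishes the argument.

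I expect the genuine obstacle to be precisely the control of these two boundary strips: unlike the bulk term they sit at the moving endpoints $\pm T$, so they cannot be dismissed as bounded and must instead be tied back to $\mu(Q_T)$ through the pointwise ratio. This is where continuity of the weight (the hypothesis $\mu \in \W^c$) guarantees the strips are honest integrals to which the shifted estimate applies, and where the structural facts $\mu_0 = \inf_{x\in\R}\mu(x) > 0$ and $\mu(Q_T) \to \infty$ keep the denominators positive and controlled. The one point demanding care is that the estimate must be available at both $+\infty$ and $-\infty$; reading the pointwise condition as holding for $|x| \to \infty$, as its role in making the weighted spaces translation-invariant requires, supplies exactly this.
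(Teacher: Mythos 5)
Your argument is correct; note that the paper itself gives no proof of this lemma (it is quoted from the preprint \cite{DW}), so there is nothing in the text to compare against, but the strip decomposition $\mu(Q_{T+\tau}) = \mu(Q_T) + \int_T^{T+\tau}\mu(x)\,dx + \int_{-(T+\tau)}^{-T}\mu(x)\,dx$ combined with the shifted pointwise bound is the natural route and surely the intended one. Three remarks. First, you are right to insist that the pointwise condition in $\W^s$ must be read as holding as $|x|\to\infty$: under the literal one--sided reading $x\to+\infty$ the lemma is false --- the continuous weight equal to $1$ on $[0,\infty)$ and to $e^{x^2}$ on $(-\infty,0]$ lies in $\W$, has $\mu(x+\tau)/\mu(x)\to 1$ as $x\to+\infty$ for every $\tau$, yet $\mu(Q_{T+\tau})/\mu(Q_T)\to\infty$ for $\tau>0$ because of the left strip. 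So the two--sided reading you adopt is necessary, and your left--strip estimate correctly invokes the condition with the shift $-\tau$. Second, what you actually establish is that $\mu(Q_{T+\tau})/\mu(Q_T)$ is bounded for large $T$ (finite $\limsup$) rather than that the limit exists; this is the reading under which all such ``$\lim<\infty$'' hypotheses are used elsewhere in the paper (e.g.\ in the proof of Proposition \ref{P26}), so it is the right conclusion to aim for, but you should state it that way. Third, two harmless slips: the combined constant should be $1+M_\tau+M_{-\tau}$ (or take $M_\tau$ to be a common bound for the shifts $\pm\tau$), and continuity of the weight plays no real role in your argument, since local integrability already makes the strips honest integrals.
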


\begin{definition}
Let $\mu, \nu \in \W$. One says that $\mu$ is equivalent
to $\nu$ and denote it $\mu \prec \nu$, if $\displaystyle
\frac{\mu}{\nu} \in \V.$
\end{definition}

Let $\mu, \nu, \gamma \in \W.$ It is clear that $\mu
\prec \mu$ (reflexivity); if $\mu \prec \nu$, then
$\nu \prec \mu$ (symmetry); and if $\mu \prec \nu$ and
$\nu \prec \gamma$, then $\mu\prec \gamma$ (transitivity).
Therefore, $\prec$ is a binary equivalence relation on $\W$. 

We have

\begin{proposition}\label{TOD} Let $\mu, \nu \in \W^{\rm Inv}$. If $\mu \prec \nu$, then $\sigma = \mu + \nu \in  \W^{\rm Inv}$.
\end{proposition}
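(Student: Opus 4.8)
The plan is to verify, one at a time, the three conditions defining membership in $\W^{\rm Inv}$ for the weight $\sigma = \mu + \nu$: that $\sigma \in \W$, that the pointwise ratio $\sigma(x+\tau)/\sigma(x)$ stays finite as $x \to \infty$, and that the interval ratio $\sigma(Q_{T+\tau})/\sigma(Q_T)$ stays finite as $T \to \infty$, for every $\tau \in \R$. The first is immediate from additivity: since $\mu,\nu\in\W$ we have $\inf_{x}\sigma(x) \ge \inf_{x}\mu(x) + \inf_{x}\nu(x) > 0$, and $\sigma(Q_T) = \mu(Q_T) + \nu(Q_T) \to \infty$ as $T\to\infty$ because both summands diverge. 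Hence $\sigma \in \W$, and this step uses nothing about the equivalence $\mu \prec \nu$.

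For the two ratio conditions the key tool is the elementary mediant inequality: for positive reals $a,b,c,d$ one has $\frac{a+c}{b+d} \le \max\bigl(\frac{a}{b},\frac{c}{d}\bigr)$. Applying it with $a=\mu(x+\tau)$, $b=\mu(x)$, $c=\nu(x+\tau)$, $d=\nu(x)$ gives
\begin{equation*}
\frac{\sigma(x+\tau)}{\sigma(x)} \le \max\left(\frac{\mu(x+\tau)}{\mu(x)},\ \frac{\nu(x+\tau)}{\nu(x)}\right),
\end{equation*}
so that $\limsup_{x\to\infty}$ of the left-hand side is at most $\max(\ell_\mu,\ell_\nu)<\infty$, where $\ell_\mu,\ell_\nu$ are the finite limits supplied by $\mu,\nu\in\W^{\rm Inv}$. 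The identical computation with the pairs $(\mu(Q_{T+\tau}),\mu(Q_T))$ and $(\nu(Q_{T+\tau}),\nu(Q_T))$ — legitimate because $\sigma(I)=\mu(I)+\nu(I)$ for every interval $I$ — disposes of the interval ratio. This already settles the proposition if the defining conditions are read as finiteness (finite $\limsup$).

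Where the hypothesis $\mu \prec \nu$ enters naturally is in the complementary, quantitative viewpoint: writing $\lambda(x)=\mu(x)/\sigma(x)\in(0,1)$ one has the convex-combination identity
\begin{equation*}
\frac{\sigma(x+\tau)}{\sigma(x)} = \lambda(x)\,\frac{\mu(x+\tau)}{\mu(x)} + \bigl(1-\lambda(x)\bigr)\,\frac{\nu(x+\tau)}{\nu(x)},
\end{equation*}
and the equivalence $\mu\prec\nu$, i.e. $\mu/\nu\in\V$, supplies constants $0<c_0\le c_1<\infty$ with $c_0\nu\le\mu\le c_1\nu$ (the lower bound because $\V\subset\W$ forces $\inf_x \mu(x)/\nu(x)>0$, the upper from $\sup_x \mu(x)/\nu(x)<\infty$); this keeps $\lambda(x)\in\bigl[\frac{c_0}{1+c_0},\frac{c_1}{1+c_1}\bigr]$, bounded away from $0$ and $1$. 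The one subtlety — and the main obstacle to watch for — is the precise meaning of ``$\lim < \infty$'' in the definition of $\W^{\rm Inv}$: since $\lambda(x)$ need not converge when the two limiting ratios differ, one cannot expect the ratio for $\sigma$ to possess an honest limit, only a finite $\limsup$. I would therefore read the condition as boundedness of the limsup, the interpretation under which the mediant estimate above is conclusive, and record $\sigma\in\W^{\rm Inv}$ accordingly.
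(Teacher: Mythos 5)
The paper itself omits the proof of this proposition (the subsection states that proofs are deferred to \cite{DW}), so there is no in-text argument to compare against; I can only assess your proposal on its own terms. Your verification that $\sigma\in\W$ is fine, and the mediant inequality $\tfrac{a+c}{b+d}\le\max(\tfrac{a}{b},\tfrac{c}{d})$ does give finite limsups for both ratio conditions. But you have correctly diagnosed, and then talked yourself out of, the real issue: the defining conditions of $\W^{\rm Inv}$ as written require the \emph{limits} to exist, and your argument --- which, as you yourself observe, never uses $\mu\prec\nu$ for the two ratio conditions --- cannot deliver that. The fact that the hypothesis $\mu\prec\nu$ does no work in your treatment of the substantive conditions should have been the signal that it is precisely the ingredient needed to close this gap, rather than a reason to read ``$\lim$'' as ``$\limsup$''.

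Here is how it closes it. Write $c=\mu/\nu$; by $\mu\prec\nu$ there are constants $0<c_0\le c_1<\infty$ with $c_0\le c\le c_1$, as you noted. Since $\mu,\nu\in\W^{\rm Inv}$, the limits $\ell_\mu(\tau)=\lim_{x\to\infty}\mu(x+\tau)/\mu(x)$ and $\ell_\nu(\tau)$ exist and are finite for every $\tau\in\R$; they are also nonzero, because the substitution $y=x+\tau$ gives $\ell_\mu(\tau)\,\ell_\mu(-\tau)=1$ and $\ell_\mu(-\tau)<\infty$. Hence $r(\tau):=\ell_\mu(\tau)/\ell_\nu(\tau)=\lim_{x\to\infty}c(x+\tau)/c(x)$ exists in $(0,\infty)$, and telescoping yields $\lim_{x\to\infty}c(x+n\tau)/c(x)=r(\tau)^n\le c_1/c_0$ for every $n\ge 1$, so $r(\tau)\le1$; combined with $r(-\tau)\le1$ and $r(\tau)r(-\tau)=1$ this forces $r(\tau)=1$, i.e.\ $\ell_\mu(\tau)=\ell_\nu(\tau)$. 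Your convex-combination identity then shows that $\sigma(x+\tau)/\sigma(x)$ converges to this common value whether or not $\lambda(x)$ converges. The identical argument applied to $T\mapsto\mu(Q_T)$ and $T\mapsto\nu(Q_T)$, whose ratio also lies in $[c_0,c_1]$ since $\mu(Q_T)=\int_{Q_T}c\,\nu\,dx$, settles the interval condition. With this supplement your proof becomes a complete proof of the proposition as literally stated.
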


\begin{proposition}\label{TOKA} Let $\mu, \nu \in \W^s$. Then their product $\pi = \mu \nu \in  \W^s$. Moreover, if $\mu \prec \nu$, then $\sigma : = \mu + \nu \in \W^s$.

\end{proposition}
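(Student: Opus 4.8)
The plan is to check, directly against the definition of $\W^s$, that each of $\pi=\mu\nu$ and $\sigma=\mu+\nu$ lies in $\W^c\cap\W$ and satisfies the translation bound $\lim_{x\to\infty}\frac{\cdot(x+\tau)}{\cdot(x)}<\infty$ for every $\tau\in\R$. Write $\mu_0=\inf_{x\in\R}\mu(x)>0$ and $\nu_0=\inf_{x\in\R}\nu(x)>0$, which are available since $\mu,\nu\in\W$.

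First I would dispose of the product. As $\mu,\nu$ are continuous and strictly positive, so is $\pi=\mu\nu$, giving $\pi\in\W^c$ and, being continuous, $\pi$ is locally integrable. The pointwise bound $\pi(x)\ge\mu_0\nu_0>0$ gives $\inf\pi>0$, and since $\nu\ge\nu_0$ one has $\pi(Q_T)=\int_{Q_T}\mu\nu\,dx\ge\nu_0\,\mu(Q_T)\to\infty$, so $\pi\in\W$. The translation bound is immediate from the factorization
\[
\frac{\pi(x+\tau)}{\pi(x)}=\frac{\mu(x+\tau)}{\mu(x)}\cdot\frac{\nu(x+\tau)}{\nu(x)},
\]
since each factor stays bounded as $x\to\infty$, hence so does the product; therefore $\pi\in\W^s$.

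For the sum, the memberships $\sigma\in\W^c$ (continuity, positivity, local integrability) and $\sigma\in\W$ are again routine: $\inf\sigma\ge\mu_0+\nu_0>0$ and $\sigma(Q_T)=\mu(Q_T)+\nu(Q_T)\to\infty$. The one point that requires the hypothesis $\mu\prec\nu$ is the translation bound, and this is where I would work. Since $\mu\prec\nu$ means $\mu/\nu\in\V$, there are constants $0<c_1\le\mu(x)/\nu(x)\le c_2<\infty$ for all $x$. Factoring $\nu$ out of numerator and denominator,
\[
\frac{\sigma(x+\tau)}{\sigma(x)}=\frac{1+\mu(x+\tau)/\nu(x+\tau)}{1+\mu(x)/\nu(x)}\cdot\frac{\nu(x+\tau)}{\nu(x)}\le\frac{1+c_2}{1+c_1}\cdot\frac{\nu(x+\tau)}{\nu(x)},
\]
and letting $x\to\infty$ the right-hand side remains bounded because $\nu\in\W^s$. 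Hence $\sigma\in\W^s$.

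The genuinely nonroutine step is exactly this last estimate: the additive structure of $\sigma$ destroys the clean multiplicative factorization that made the product trivial, and a priori the quotient $\sigma(x+\tau)/\sigma(x)$ could blow up if one weight dominated near $x$ and the other near $x+\tau$. The equivalence $\mu\prec\nu$ is precisely what prevents this, by trapping $\mu/\nu$ between two positive constants. I would also remark that the same conclusion can be reached without invoking $\prec$, via the elementary inequality $au+bv\le\max(a,b)(u+v)$ for $u,v\ge0$, which yields $\limsup_{x\to\infty}\sigma(x+\tau)/\sigma(x)\le\max(L_\mu,L_\nu)$, where $L_\mu,L_\nu$ denote the finite translation bounds of $\mu$ and $\nu$; but since the statement asks only for the conclusion under $\mu\prec\nu$, the comparison argument above is the most direct route.
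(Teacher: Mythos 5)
Your argument is sound, and since the paper omits its own proof of this proposition (deferring to \cite{DW}), there is nothing to compare it against line by line; the verification that $\pi$ and $\sigma$ lie in $\W^c\cap\W$ is exactly as routine as you say, the multiplicative factorization handles the product completely, and your identification of the sum's translation bound as the only point where $\mu\prec\nu$ enters is the right diagnosis. The correct reading of $\mu\prec\nu$ as $0<c_1\le\mu/\nu\le c_2<\infty$ (via $\mu/\nu\in\V\subset\W$) is also what the definitions give.

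The one caveat concerns the literal wording of the definition of $\W^s$, which asks that $\lim_{x\to\infty}\mu(x+\tau)/\mu(x)$ \emph{exist} and be finite, not merely that the ratio be eventually bounded. For the product this is automatic (a product of two convergent sequences converges), but for the sum your estimate, like the alternative $au+bv\le\max(a,b)(u+v)$ route you mention, only delivers $\limsup_{x\to\infty}\sigma(x+\tau)/\sigma(x)<\infty$. If one insists on existence of the limit, a short additional argument closes the gap: writing $\rho=\mu/\nu$, the existence of the limits $L_\mu(\tau)$ and $L_\nu(\tau)$ forces $\lim_{x\to\infty}\rho(x+\tau)/\rho(x)=L_\mu(\tau)/L_\nu(\tau)=:R(\tau)$, and since $\lim_{x\to\infty}\rho(x+n\tau)/\rho(x)=R(\tau)^n$ must stay in $[c_1/c_2,\,c_2/c_1]$ for every $n$, necessarily $R(\tau)=1$; because $\rho$ is bounded this gives $\rho(x+\tau)-\rho(x)\to0$, hence $\bigl(1+\rho(x+\tau)\bigr)/\bigl(1+\rho(x)\bigr)\to1$ and
$$\lim_{x\to\infty}\frac{\sigma(x+\tau)}{\sigma(x)}=\lim_{x\to\infty}\frac{1+\rho(x+\tau)}{1+\rho(x)}\cdot\frac{\nu(x+\tau)}{\nu(x)}=L_\nu(\tau)<\infty.$$
This is a refinement rather than a repair --- your bound is correct and is what the result is used for --- but under the strict reading of $\W^s$ it is the step that actually needs the equivalence $\mu\prec\nu$, and it shows your closing remark should be tempered: without $\prec$ one gets boundedness of the ratio but not, in general, existence of the limit.
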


The next theorem describes all the nonconstant polynomials belonging to the set of weights $\W$.

\begin{theorem}\cite[Diagana]{DW} If $\mu \in \W$ is a nonconstant polynomial of degree $N$, then $N$ is necessarily even ($N = 2n'$ for some nonnegative integer $n'$). More precisely, $\mu$ can be written in the following form:
$$\mu(x) = a  \prod_{k=0}^{n} (x^2 + a_k x + b_k)^{m_k}$$
where $a> 0$ is a constant, $a_k$ and $b_k$ are some real numbers satisfying $a_k^2 - 4b_k < 0$, and $m_k$ are nonnegative integers for $k =0, ..., n$. Furthermore,  the weight $\mu$ given above belongs to $\W^s$.
\end{theorem}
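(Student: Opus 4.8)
The plan is to extract from the hypothesis $\mu \in \W$ the structural constraints forced on a nonconstant polynomial, and then read off the displayed factorization. The only property of $\W$ that does any real work here is the positive lower bound $\inf_{x \in \R} \mu(x) = \mu_0 > 0$, together with pointwise positivity $\mu > 0$; local integrability and $\lim_{T \to \infty} \mu(Q_T) = \infty$ hold automatically for any polynomial bounded below by a positive constant, since then $\mu(Q_T) \ge 2T\mu_0 \to \infty$.

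Writing $\mu(x) = c_N x^N + \cdots + c_0$ with $c_N \ne 0$, I would first rule out odd $N$: an odd-degree real polynomial is unbounded below, contradicting $\mu \ge \mu_0 > 0$. Thus $N$ is even, say $N = 2n'$. With $N$ even, $\mu(x)$ has the sign of $c_N$ for $|x|$ large, so $c_N > 0$ is forced (otherwise $\mu(x) \to -\infty$), and I set $a := c_N > 0$.

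Next, the pointwise positivity $\mu(x) > 0$ for all $x \in \R$---itself a consequence of $\inf_x \mu(x) = \mu_0 > 0$---shows that $\mu$ has no real zero. A monic real polynomial with no real root factors over $\R$ into monic irreducible quadratics, each of the form $x^2 + a_k x + b_k$ with $a_k^2 - 4b_k < 0$; collecting repeated factors together with the leading constant $a$ gives $\mu(x) = a \prod_{k=0}^{n}(x^2 + a_k x + b_k)^{m_k}$ with $a > 0$ and $m_k \in \N$. Comparing degrees yields $2\sum_k m_k = N = 2n'$, which re-confirms the parity assertion.

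Finally, for the claim $\mu \in \W^s$, note that $\mu$ is a polynomial, hence continuous, so $\mu \in \W^c$; combined with the hypothesis this gives $\mu \in \W^c \cap \W$. It then remains only to verify the ratio condition: for each fixed $\tau \in \R$, both $\mu(x+\tau)$ and $\mu(x)$ are polynomials of degree $N$ sharing the same leading coefficient $a$, whence $\lim_{x \to \infty} \mu(x+\tau)/\mu(x) = 1 < \infty$. Therefore $\mu \in \W^s$. I do not anticipate a genuine obstacle: the argument is elementary real algebra, and the only point deserving care is the clean passage between ``$\inf_x \mu(x) > 0$'' and ``even degree, positive leading coefficient, and no real roots,'' which is precisely what links the hypothesis to the displayed factorization.
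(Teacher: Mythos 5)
Your proposal is correct and complete. Note that the paper itself gives no proof of this statement --- it is quoted from the preprint \cite{DW} and the surrounding subsection explicitly omits proofs --- so there is nothing to compare against line by line; but your argument is exactly the elementary one the statement calls for: $\inf_{x\in\R}\mu(x)=\mu_0>0$ rules out odd degree and a negative leading coefficient, absence of real roots yields the factorization into irreducible quadratics with $a_k^2-4b_k<0$, and the equality of degree and leading coefficient of $\mu(\cdot+\tau)$ and $\mu(\cdot)$ gives $\lim_{x\to\infty}\mu(x+\tau)/\mu(x)=1$, hence $\mu\in\W^s$. The only point worth making explicit for the ``furthermore'' clause is that a polynomial of the displayed form is continuous, everywhere positive, and tends to $+\infty$ at $\pm\infty$ (or is a positive constant), so it attains a positive minimum and satisfies $\mu(Q_T)\ge 2T\mu_0\to\infty$; you address this in your opening paragraph, so the membership $\mu\in\W^c\cap\W$ is fully justified.
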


\subsection{Doubly-Weighted Pseudo-Almost Periodic Functions}

\begin{definition}\label{D}
A function $f \in C(\R , \X)$ is called (Bohr) almost periodic if
for each $\varepsilon > 0$ there exists $l(\varepsilon)
> 0$ such that every interval of length  $l(\varepsilon)
$ contains a number $\tau$ with the property that
$$\|f(t +\tau)
- f(t) \| < \varepsilon \ \ \mbox{for each} \ \ t \in \R.$$
\end{definition}

The collection of
all almost periodic functions will be denoted $AP(\X)$.

\begin{definition}\label{D}
A function $F \in C(\R \times \Y, \X)$ is called (Bohr) almost
periodic in $t \in \R$ uniformly in $y \in \Y$ if for each
$\varepsilon
> 0$ and any compact $K \subset \Y$ there exists $l(\varepsilon)$ such that every interval of length  $l(\varepsilon)$ contains
a number $\tau$ with the property that
$$\|F(t + \tau, y) - F(t, y)\| < \varepsilon \ \ \mbox{for each} \ \ t \in \R, \ \ y \in K.$$

The collection of those functions is denoted by $AP(\Y, \X)$.
\end{definition}

If $\mu, \nu \in \W$, we then define

$$PAP_0(\X, \mu, \nu) := \Bigg\{ f \in BC(\R , \X): \ \ \lim_{T \to \infty}
\displaystyle{\frac{1}{\mu(Q_T)}} \int_{Q_T} \left\|
f(\sigma)\right\| \, \nu(\sigma) \, d\sigma = 0\Bigg\}.$$

Similarly, we define $PAP_0(\Y, \X, \mu, \nu)$ as the collection
of jointly continuous functions $F: \R \times \Y \mapsto \X$ such
that $F(\cdot, y)$ is bounded for each $y \in \Y$ and
$$\lim_{T \to \infty} \displaystyle{\frac{1}{\mu(Q_T)}}
\left\{\int_{Q_T} \| F(s, y)\| \, \nu(s) \, ds\right\} = 0$$
uniformly in $y \in \Y$.

\begin{definition}\label{DD} Let $\mu, \nu \in \W$.
A function $f \in C(\R , \X)$ is called doubly-weighted pseudo-almost
periodic if it can be expressed
as $f = g + \phi,$ where $g \in AP(\X)$ and $\phi \in PAP_0(\X,
\mu, \nu)$. The collection of such functions will be denoted by
$PAP({\mathbb X}, \mu, \nu)$.
\end{definition}

\begin{definition}\label{KK} Let $\mu, \nu \in \W$.
A function $F \in C(\R \times \Y , \X)$ is called doubly-weighted pseudo-almost
periodic if it can be expressed
as $F= G + \Phi,$ where $G \in AP(\Y, \X)$ and $\Phi \in PAP_0(\Y, \X,
\mu, \nu)$. The collection of such functions will be denoted by
$PAP(\Y, {\mathbb X}, \mu, \nu)$.
\end{definition}

\begin{proposition}\cite[Diagana]{DW}\label{P26}
Let $\mu \in \W$ and let $\nu \in \W^{\rm Inv}$ such that \begin{eqnarray}\label{HHH}\displaystyle \sup_{T > 0}\Bigg[\displaystyle{\frac{\nu(Q_{T})}{\mu(Q_T)}}\Bigg]< \infty.\end{eqnarray}Let $f \in PAP_0(\R, \mu, \nu)$ and let $g \in
L^1(\R)$. Suppose 
\begin{eqnarray}\label{JJ} \displaystyle \lim_{T \to \infty} \Bigg[\displaystyle{\frac{\mu(Q_{T+|\tau|})}{\mu(Q_T)}}\Bigg] < \infty \ \ \mbox{for all} \ \ \tau \in \R.\end{eqnarray}
Then $f \ast g$, the convolution of $f$ and $g$ on $\R$,
belongs to $PAP_0(\R, \mu, \nu)$.
\end{proposition}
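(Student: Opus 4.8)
The plan is to verify the two defining properties of $PAP_0(\R,\mu,\nu)$ for the convolution $f\ast g$. Boundedness and continuity are immediate: since $f\in BC(\R,\X)$ and $g\in L^1(\R)$ one has $\|(f\ast g)(t)\|\le \|f\|_\infty\,\|g\|_{L^1(\R)}$, and continuity follows from the continuity of translation in $L^1(\R)$. The real content is the doubly-weighted ergodicity condition
\[
\lim_{T\to\infty}\frac{1}{\mu(Q_T)}\int_{Q_T}\|(f\ast g)(\sigma)\|\,\nu(\sigma)\,d\sigma=0.
\]

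First I would bound the integrand by $\|(f\ast g)(\sigma)\|\le\int_\R\|f(\sigma-s)\|\,|g(s)|\,ds$ and apply Tonelli's theorem to interchange the order of integration, so that, writing $\Phi_T(s):=\frac{1}{\mu(Q_T)}\int_{Q_T}\|f(\sigma-s)\|\,\nu(\sigma)\,d\sigma$,
\[
\frac{1}{\mu(Q_T)}\int_{Q_T}\|(f\ast g)(\sigma)\|\,\nu(\sigma)\,d\sigma
\le\int_\R |g(s)|\,\Phi_T(s)\,ds.
\]
The goal then becomes to show that $\int_\R|g(s)|\,\Phi_T(s)\,ds\to 0$, which I would establish by the dominated convergence theorem in the variable $s$ with respect to the finite measure $|g(s)|\,ds$. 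For the dominating function, the trivial estimate $\Phi_T(s)\le\|f\|_\infty\,\nu(Q_T)/\mu(Q_T)$ together with hypothesis \eqref{HHH} gives a uniform bound $\Phi_T(s)\le M$ for all $T>0$ and $s\in\R$, and $|g|\,M\in L^1(\R)$ supplies the required envelope.

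The crux is the pointwise convergence $\Phi_T(s)\to 0$ for each fixed $s$. Substituting $\xi=\sigma-s$ and using $Q_T-s\subset Q_{T+|s|}$, I would write
\[
\Phi_T(s)\le\frac{1}{\mu(Q_T)}\int_{Q_{T+|s|}}\|f(\xi)\|\,\nu(\xi+s)\,d\xi.
\]
Here the membership $\nu\in\W^{\rm Inv}$ enters: it lets me dominate $\nu(\xi+s)\le C_s\,\nu(\xi)$ for all sufficiently large $|\xi|$, while the contribution of the remaining bounded range of $\xi$ is a fixed finite quantity that is killed upon division by $\mu(Q_T)\to\infty$. This reduces the estimate to
\[
\Phi_T(s)\le\frac{\text{const}}{\mu(Q_T)}
+C_s\,\frac{\mu(Q_{T+|s|})}{\mu(Q_T)}\cdot
\frac{1}{\mu(Q_{T+|s|})}\int_{Q_{T+|s|}}\|f(\xi)\|\,\nu(\xi)\,d\xi.
\]
Letting $T\to\infty$, the first term vanishes, the ratio $\mu(Q_{T+|s|})/\mu(Q_T)$ stays bounded by hypothesis \eqref{JJ} (with $\tau=|s|$), and the last factor tends to $0$ because $f\in PAP_0(\R,\mu,\nu)$ and $T+|s|\to\infty$; hence $\Phi_T(s)\to 0$.

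I expect the main obstacle to be exactly this weight-translation step: justifying $\nu(\xi+s)\le C_s\,\nu(\xi)$ for large $|\xi|$ from $\nu\in\W^{\rm Inv}$, and correctly pairing the shifted window $Q_{T+|s|}$ with the normalizing factor $\mu(Q_T)$ so that \eqref{JJ} can be invoked. Once \eqref{HHH} and \eqref{JJ} are used at the right places, the remaining ingredients — Tonelli's theorem, the uniform bound, and the dominated convergence theorem — are routine.
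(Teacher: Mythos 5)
Your argument is correct and follows essentially the same route as the paper's proof: Tonelli to reduce to $\int_\R |g(s)|\,\Phi_T(s)\,ds$, the uniform bound on $\Phi_T$ from \eqref{HHH} as the dominating function, the shift $Q_T - s \subset Q_{T+|s|}$ combined with \eqref{JJ} and $\nu \in \W^{\rm Inv}$ for the pointwise limit $\Phi_T(s) \to 0$, and the dominated convergence theorem to conclude. If anything, you are more explicit than the paper at the one nontrivial point, namely how the translation condition on $\nu$ is used to pass from $\nu(\xi+s)$ back to $\nu(\xi)$ up to a constant plus a negligible bounded-range contribution, a step the paper dismisses with ``one can easily see.''
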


\begin{proof}
It is clear that if $f \in PAP_0(\R, \mu, \nu)$ and $g \in L^1(\R)$, then their convolution
$f \ast g \in BC(\R, \R)$. Now setting $$J(T, \mu, \nu) :=
\displaystyle{\frac{1}{\mu(Q_T)}} \int_{Q_T}
\int_{-\infty}^{+\infty} |f(t-s)|\, |g(s)| \nu(t)\, ds dt$$it
follows that
\begin{eqnarray}
\displaystyle{\frac{1}{\mu(Q_T)}} \int_{Q_T} |(f
\ast g)(t)| \nu(t) dt &\leq& J(T,\mu, \nu)\nonumber \\
&=& \int_{-\infty}^{+\infty} |g(s)| \left(
\displaystyle{\frac{1}{\mu(Q_T)}}
\int_{Q_T} |f(t-s)| \nu(t) dt\right)ds \nonumber \\
&=& \int_{-\infty}^{+\infty} |g(s)| \phi_{T}(s) ds,\nonumber
\end{eqnarray} where
\begin{eqnarray*}
\displaystyle \phi_T(s) &=& \frac{1}{\mu(Q_T)}
\int_{Q_T} |f(t-s)| \nu(t) dt \\
&=& \frac{\mu(Q_{T+|s|})}{\mu(Q_T)}
\,.\, \frac{1}{\mu(Q_{T+|s|})}
\int_{Q_T} |f(t-s)| \nu(t) dt\\
&\leq& \frac{\mu(Q_{T+|s|})}{\mu(Q_T)}
\,.\, \frac{1}{\mu(Q_{T+|s|})}
\int_{Q_{T+|s|}} |f(t)| \nu(t+s) dt.
\end{eqnarray*}
Using the fact that $\nu \in \W^{\rm Inv}$ and Eq. (\ref{JJ}), one can easily see that
$\phi_T(s)
\mapsto 0$ as $T \mapsto \infty$ for all $s \in \R$. Next, since $\phi_T$ is bounded, i.e., 
$$|\phi_T(s)| \leq \|f\|_\infty \,.\,\displaystyle \sup_{T > 0}\displaystyle{\frac{\nu(Q_{T})}{\mu(Q_T)}}< \infty$$ and $g \in L^1(\R)$, using the
Lebesgue Dominated Convergence Theorem it follows that
$$\lim_{T \to \infty} \left\{\int_{-\infty}^{+\infty} |g(s)| \phi_{T}(s) ds
\right\} = 0,$$ and hence $f \ast g \in PAP_0(\R, \mu, \nu)$.  
\end{proof}

\begin{corollary} Let $\mu \in \W$ and let $\nu \in \W^{\rm Inv}$ such that Eqs. (\ref{HHH}) -- (\ref{JJ}) hold. If $f \in PAP(\R, \mu, \nu)$ and $g \in
L^1(\R)$, then $f \ast g$ belongs to $PAP (\R, \mu, \nu)$.
\end{corollary}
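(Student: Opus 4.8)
The plan is to reduce the statement to two independent pieces by combining the canonical decomposition of a doubly-weighted pseudo-almost periodic function with the linearity of convolution. By Definition \ref{DD} I would write $f \in PAP(\R, \mu, \nu)$ as $f = h + \ph$ with $h \in AP(\R)$ and $\ph \in PAP_0(\R, \mu, \nu)$; since convolution against a fixed $g \in L^1(\R)$ is linear, this gives $f \ast g = h \ast g + \ph \ast g$. It then suffices to show separately that $h \ast g \in AP(\R)$ and that $\ph \ast g \in PAP_0(\R, \mu, \nu)$, after which the conclusion $f \ast g \in PAP(\R, \mu, \nu)$ follows at once from Definition \ref{DD}.

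The pseudo-almost periodic component requires no new work: because $\ph \in PAP_0(\R, \mu, \nu)$, $g \in L^1(\R)$, and the standing hypotheses \eqref{HHH}--\eqref{JJ} are in force, Proposition \ref{P26} applies verbatim and yields $\ph \ast g \in PAP_0(\R, \mu, \nu)$. This is precisely the content the proposition was designed to supply, so the corollary is really just an unpacking of it once the decomposition is in place.

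The main step, and the only place where genuine (if classical) work is needed, is to verify that the convolution of an almost periodic function with an $L^1$ kernel is again almost periodic, i.e. $h \ast g \in AP(\R)$. First I would note that $h \ast g$ is bounded and continuous, since $h$ is bounded and $g \in L^1(\R)$. For almost periodicity I would fix $\ep > 0$ and, assuming $\|g\|_{L^1} > 0$ (the case $g = 0$ being trivial), invoke the definition of almost periodicity with tolerance $\ep/\|g\|_{L^1}$ to obtain a relatively dense set of numbers $\tau$ with $\|h(t+\tau) - h(t)\| < \ep/\|g\|_{L^1}$ for all $t \in \R$. Bringing the translation inside the convolution integral and estimating
$$\|(h \ast g)(t+\tau) - (h \ast g)(t)\| \leq \int_{-\infty}^{+\infty} \|h(t+\tau-s) - h(t-s)\|\, |g(s)|\, ds < \ep$$
uniformly in $t \in \R$ then exhibits every such $\tau$ as an $\ep$-almost period of $h \ast g$, so that $h \ast g \in AP(\R)$.

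Assembling the two parts completes the argument. I do not expect any deep obstacle: the only subtlety is bookkeeping, namely keeping the two weights $\mu, \nu$ and the $L^1$ kernel $g$ distinct from the almost periodic component $h$, and confirming that Proposition \ref{P26} is applicable exactly as stated (which it is, its hypotheses being precisely \eqref{HHH}--\eqref{JJ}). No conditions on $\mu$ or $\nu$ beyond those already imposed are required.
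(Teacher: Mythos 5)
Your proof is correct and is exactly the argument the paper intends (the corollary is stated without proof as an immediate consequence of Proposition \ref{P26}): decompose $f = h + \varphi$, apply Proposition \ref{P26} to the ergodic part, and use the classical fact that convolution with an $L^1$ kernel preserves almost periodicity. Nothing further is needed.
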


\begin{theorem}\cite[Diagana]{DW} If $\mu, \nu \in \W$ such that the space $PAP_0(\X, \mu, \nu)$ is translation-invariant and if
\begin{eqnarray}\label{I}\displaystyle \inf_{T > 0} \Bigg[\displaystyle{\frac{\nu(Q_{T})}{\mu(Q_T)}}\Bigg] = \delta_0 > 0,
\end{eqnarray}
then the decomposition of doubly-weighted pseudo-almost periodic functions is unique.
\end{theorem}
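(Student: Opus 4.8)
The plan is to deduce the uniqueness of the decomposition from the single assertion that $AP(\X)\cap PAP_0(\X,\mu,\nu)=\{0\}$. Suppose $f$ admits two decompositions $f=g_1+\phi_1=g_2+\phi_2$ with $g_1,g_2\in AP(\X)$ and $\phi_1,\phi_2\in PAP_0(\X,\mu,\nu)$. Both $AP(\X)$ and $PAP_0(\X,\mu,\nu)$ are linear subspaces of $BC(\R,\X)$, so the function $h:=g_1-g_2=\phi_2-\phi_1$ belongs to their intersection. Once we show $h\equiv 0$ we obtain $g_1=g_2$ and $\phi_1=\phi_2$, which is exactly uniqueness. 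Thus everything reduces to proving that a function which is simultaneously almost periodic and lies in the ergodic space $PAP_0(\X,\mu,\nu)$ must vanish.

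So let $h\in AP(\X)\cap PAP_0(\X,\mu,\nu)$ and assume, for contradiction, that $h\neq 0$. Then $\psi:=\|h(\cdot)\|$ is a nonnegative, not identically zero, scalar almost periodic function (almost periods of $h$ are almost periods of $\psi$ because $\bigl|\,\|h(t+\tau)\|-\|h(t)\|\,\bigr|\le\|h(t+\tau)-h(t)\|$). Two classical facts about such $\psi$ will be used. First, its mean $M:=\lim_{R\to\infty}\frac{1}{2R}\int_{-R}^{R}\psi(u)\,du$ exists and is strictly positive; strict positivity follows from the relative density of the almost periods together with the uniform continuity of $\psi$, which force $\psi$ to exceed a fixed constant on a relatively dense family of intervals. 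Second, the running averages converge uniformly in the base point, i.e. $\frac{1}{2R}\int_{s-R}^{s+R}\psi(u)\,du\to M$ uniformly in $s\in\R$; hence one may fix $R$ so large that $\frac{1}{2R}\int_{s-R}^{s+R}\psi(u)\,du\ge \frac{M}{2}$ for every $s\in\R$.

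The heart of the argument is a translation-averaging device that converts the $(\mu,\nu)$-weighted mean into the ordinary mean. For $a\in\R$ and $T>0$ set \[ I(T,a):=\frac{1}{\nu(Q_T)}\int_{Q_T}\|h(s+a)\|\,\nu(s)\,ds. \] For each fixed $a$, the translation-invariance hypothesis gives $h(\cdot+a)\in PAP_0(\X,\mu,\nu)$, so $\frac{1}{\mu(Q_T)}\int_{Q_T}\|h(s+a)\|\nu(s)\,ds\to 0$; combined with hypothesis \eqref{I}, which yields $\nu(Q_T)\ge\delta_0\,\mu(Q_T)$ and hence $I(T,a)\le \delta_0^{-1}\frac{1}{\mu(Q_T)}\int_{Q_T}\|h(s+a)\|\nu(s)\,ds$, we conclude $I(T,a)\to 0$ as $T\to\infty$ for every fixed $a$. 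On the other hand, averaging $I(T,\cdot)$ over $a\in[-R,R]$ and applying Fubini (all integrands are nonnegative) gives \[ \frac{1}{2R}\int_{-R}^{R}I(T,a)\,da=\frac{1}{\nu(Q_T)}\int_{Q_T}\nu(s)\Bigl(\frac{1}{2R}\int_{s-R}^{s+R}\psi(u)\,du\Bigr)ds\ge\frac{M}{2}, \] using the uniform lower bound on the inner running average. Since $0\le I(T,a)\le\|h\|_\infty$ and $I(T,a)\to0$ pointwise in $a$, the Lebesgue dominated convergence theorem forces $\frac{1}{2R}\int_{-R}^{R}I(T,a)\,da\to 0$ as $T\to\infty$, contradicting the bound $\ge M/2>0$. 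Hence $h\equiv 0$.

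I expect the main obstacle to be precisely the mismatch exploited in the last step: because $\nu$ need only be bounded below (not above) and $\mu(Q_T)$ may grow much faster than $T$, one cannot compare the weighted average $\frac{1}{\mu(Q_T)}\int_{Q_T}\|h\|\nu$ directly with the unweighted mean of $\|h\|$, nor can one assume that the set where $\|h\|$ is large carries a definite fraction of the $\nu$-mass. The translation-invariance hypothesis and \eqref{I} are exactly what make the averaging-plus-Fubini trick legitimate, and verifying the uniform convergence of the running means of the almost periodic function $\psi$ is the technical point on which the lower bound $M/2$ rests.
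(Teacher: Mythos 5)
Your proof is correct. Note that the paper states this theorem without proof, citing the preprint \cite{DW}, so there is no in-paper argument to compare against; the standard route, here as in the single-weight literature, is exactly your reduction to showing $AP(\X)\cap PAP_0(\X,\mu,\nu)=\{0\}$. Where you depart from the customary treatment is in how the contradiction is produced. The usual argument picks a relatively dense family of intervals on which $\|h\|\ge\varepsilon_0$ and tries to bound $\frac{1}{\mu(Q_T)}\int_{Q_T}\|h\|\,\nu\,dt$ below by $\varepsilon_0\,\nu(\Theta\cap Q_T)/\mu(Q_T)$; since $\mu(Q_T)$ may grow much faster than $T$ and $\nu$ is only bounded below, this requires control that Eq.~(\ref{I}) alone does not provide. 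Your translation-averaging device avoids that issue cleanly: Eq.~(\ref{I}) is used only to pass from the $(\mu,\nu)$-average to the normalized $\nu$-average $I(T,a)$, translation invariance is used only to force $I(T,a)\to0$ pointwise in $a$, and the Tonelli swap combined with the uniform convergence of the running Bohr means supplies the lower bound $M/2$ without any pointwise control on $\nu$. The auxiliary facts you invoke --- that $\|h(\cdot)\|$ is almost periodic, that a nonnegative, not identically zero almost periodic function has strictly positive mean, that $\frac{1}{2R}\int_{s-R}^{s+R}\psi\to M$ uniformly in $s$, and bounded convergence on the fixed interval $[-R,R]$ --- are all classical and correctly applied, so the argument is complete and, if anything, more robust than the direct density argument.
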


\begin{theorem}\cite[Diagana]{DW}\label{toka} Let $\mu, \nu \in \W$ and
let $f \in PAP(\Y, \X, \mu, \nu)$ satisfying the Lipschitz condition
$$\|f(t, u) - f(t,v)\| \leq L \,.\, \|u-v\|_{\Y} \ \ \mbox{for
all} \ \ u,v \in \Y, \ t \in \R.$$ If $h \in PAP(\Y, \mu, \nu)$,
then $f(\cdot , h(\cdot)) \in PAP(\X, \mu, \nu)$.
\end{theorem}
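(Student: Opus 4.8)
The plan is to prove the result through the two-step composition technique for pseudo-almost periodic functions. Using Definitions \ref{DD} and \ref{KK}, write $f = G + \Phi$ with $G \in AP(\Y, \X)$ and $\Phi \in PAP_0(\Y, \X, \mu, \nu)$, and $h = h_1 + h_2$ with $h_1 \in AP(\Y)$ and $h_2 \in PAP_0(\Y, \mu, \nu)$. I would then decompose the superposition as
$$f(t, h(t)) = G(t, h_1(t)) + \big[f(t, h(t)) - f(t, h_1(t))\big] + \Phi(t, h_1(t)), \quad t \in \R,$$
and show that the first term belongs to $AP(\X)$ while the bracketed term and $\Phi(\cdot, h_1(\cdot))$ both belong to $PAP_0(\X, \mu, \nu)$; by Definition \ref{DD} this places $f(\cdot, h(\cdot))$ in $PAP(\X, \mu, \nu)$. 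Joint continuity of the composed maps is inherited from the continuity of $f$, $G$, $\Phi$, $h_1$ and $h$, so I focus only on the mean estimates.

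The first term is handled by the classical composition theorem for almost periodic functions: since $h_1 \in AP(\Y)$ its range has compact closure $K := \overline{\{h_1(t): t \in \R\}}$, the function $G$ is uniformly continuous on $\R \times K$, and therefore $t \mapsto G(t, h_1(t)) \in AP(\X)$. For the bracketed term, the Lipschitz hypothesis gives the pointwise estimate
$$\big\|f(t, h(t)) - f(t, h_1(t))\big\| \le L\,\|h(t) - h_1(t)\|_{\Y} = L\,\|h_2(t)\|_{\Y},$$
hence
$$\frac{1}{\mu(Q_T)} \int_{Q_T} \big\|f(s, h(s)) - f(s, h_1(s))\big\|\,\nu(s)\,ds \le \frac{L}{\mu(Q_T)} \int_{Q_T} \|h_2(s)\|_{\Y}\,\nu(s)\,ds \to 0$$
as $T \to \infty$, because $h_2 \in PAP_0(\Y, \mu, \nu)$. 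Thus the bracketed term lies in $PAP_0(\X, \mu, \nu)$.

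The heart of the argument is the third term $\Phi(\cdot, h_1(\cdot))$. First I would record that $\Phi = f - G$ is uniformly continuous in its second variable uniformly in $t$ on $\R \times K$: the Lipschitz condition controls $f$, while the almost periodicity of $G$ guarantees its uniform continuity on $\R \times K$. Given $\ep > 0$, pick $\delta > 0$ with $\|\Phi(t, y) - \Phi(t, y')\| < \ep$ whenever $y, y' \in K$ and $\|y - y'\|_{\Y} < \delta$, cover $K$ by finitely many balls $B(y_k, \delta)$, $k = 1, \dots, m$, and let $E_1, \dots, E_m$ be the $h_1$-preimages of a disjointification of these balls, so that the $E_k$ partition $\R$ and $\|\Phi(t, h_1(t)) - \Phi(t, y_k)\| < \ep$ for $t \in E_k$. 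This yields
$$\frac{1}{\mu(Q_T)} \int_{Q_T} \|\Phi(s, h_1(s))\|\,\nu(s)\,ds \le \ep\,\frac{\nu(Q_T)}{\mu(Q_T)} + \sum_{k=1}^{m} \frac{1}{\mu(Q_T)} \int_{Q_T} \|\Phi(s, y_k)\|\,\nu(s)\,ds,$$
where each summand tends to $0$ as $T \to \infty$ because $\Phi(\cdot, y_k) \in PAP_0(\X, \mu, \nu)$ for fixed $y_k$. Letting $T \to \infty$ and then $\ep \to 0$ shows $\Phi(\cdot, h_1(\cdot)) \in PAP_0(\X, \mu, \nu)$ and completes the proof.

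I expect the main obstacle to be this third term, and in particular controlling the residual $\ep\,\nu(Q_T)/\mu(Q_T)$: the estimate above is conclusive only if $\nu(Q_T)/\mu(Q_T)$ stays bounded as $T \to \infty$, i.e.\ under a ratio condition such as \eqref{HHH}. Without it the finite-net reduction leaves an uncontrolled factor, so I would either add this hypothesis (as is done for the convolution result in Proposition \ref{P26}) or, when the weights are comparable, exploit $\mu \prec \nu$, which forces $\mu(Q_T) \asymp \nu(Q_T)$ and hence a bounded ratio. The finite-net and partition bookkeeping is the most laborious part but is conceptually routine once uniform continuity of $\Phi$ on $\R \times K$ and the weight-ratio bound are secured.
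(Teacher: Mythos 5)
Your decomposition $f(t,h(t)) = G(t,h_1(t)) + [f(t,h(t))-f(t,h_1(t))] + \Phi(t,h_1(t))$ is the standard route for composition theorems of this type, and it is the natural reading of what the omitted proof (the paper defers entirely to \cite{DW} and gives no argument here) must do; your treatment of the first two terms is correct, and for the first term the uniform continuity of $G$ on $\R\times K$ that you invoke is indeed a standard consequence of $G\in AP(\Y,\X)$ (reduce to a compact $t$-window via $\varepsilon$-translation numbers). The substantive point is the one you flag yourself: the finite-net estimate for $\Phi(\cdot,h_1(\cdot))$ leaves the residual $\varepsilon\,\nu(Q_T)/\mu(Q_T)$, and with only $\mu,\nu\in\W$ as hypothesized there is no control on this ratio, so the argument is conclusive only under something like $\limsup_{T\to\infty}\nu(Q_T)/\mu(Q_T)<\infty$ (condition \eqref{HHH}) or comparability $\mu\prec\nu$. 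This is a genuine gap in the \emph{statement} rather than in your proof, and it is not repaired elsewhere in the paper: Section 4 imposes only the opposite bound \eqref{I}, namely $\inf_{T>0}\nu(Q_T)/\mu(Q_T)=\delta_0>0$, so the invocation of Theorem \ref{toka} inside Lemma \ref{ll2} inherits the same unverified hypothesis. Your proposal, with the ratio condition added explicitly, is a complete and correct proof; without it, no proof along these lines (and, as far as I can see, no proof at all) closes the third term.
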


\section{Existence of a Doubly-Weighted Mean for Almost Periodic Functions}
Let $\mu, \nu \in \W$. If $f: \R \mapsto \X$ is a bounded continuous function, we define its {\it doubly-weighted mean}, if the limit exists, by
$${\mathcal M}(f, \mu, \nu): = \lim_{T \to \infty} \frac{1}{\mu(Q_T)} \int_{Q_T} f(t) \nu(t) dt.$$

It is well-known that if $f \in AP(\X)$, then its mean defined by
$${\mathcal M}(f):= \lim_{T\to \infty} \frac{1}{2T} \int_{Q_T} f(t)dt$$
exists \cite{B}. Consequently, for every $\lambda \in \R$, the following limit
$$a(f, \lambda):= \lim_{T \to \infty} \frac{1}{2T} \int_{Q_T} f(t) e^{-i \lambda t} dt$$
exists and is called the Bohr transform of $f$. 

It is well-known that
$a(f, \lambda)$ is nonzero at most at countably many points \cite{B}.
The set defined by
$$\sigma_b(f) := \Big\{\lambda \in \R: a(f, \lambda) \not = 0\Big\}$$
is called the Bohr spectrum of $f$ \cite{M}. 


\begin{theorem}\label{H} {\rm (Approximation Theorem)} \cite{LV, M} Let $f \in AP(\X)$. Then for every $\varepsilon > 0$ there exists a trigonometric polynomial 
$$P_\varepsilon (t) = \sum_{k=1}^n a_k e^{i\lambda_k t}$$ where $a_k \in \X$ and $\lambda_k \in \sigma_b (f)$ such that
$\|f(t) - P_\varepsilon (t)\| < \varepsilon$
for all $t \in \R$.

\end{theorem}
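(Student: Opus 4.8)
The plan is to prove the statement by the classical Bochner--Fej\'er summation procedure, the almost-periodic analogue of Fej\'er's theorem on uniform Ces\`aro summability of Fourier series; the naive truncation of the Fourier--Bohr series of $f$ need not converge uniformly, so it is replaced by a positive summation kernel whose frequencies are controlled.

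First I would attach to $f$ its Fourier--Bohr series. Since $\sigma_b(f)$ is at most countable, enumerate it as $\sigma_b(f) = \{\lambda_1, \lambda_2, \dots\}$ and set $a_k := a(f, \lambda_k)$, so that $f \sim \sum_k a_k e^{i\lambda_k t}$. Next, choose an increasing sequence of finite sets of rationally independent base frequencies $\beta_1^{(n)}, \dots, \beta_{m_n}^{(n)} \in \R$ whose integer span exhausts, as $n \to \infty$, the $\Z$-module generated by $\sigma_b(f)$, so that each $\lambda_j$ is eventually an integer combination of the chosen base frequencies (after a common scaling these combinations may be taken with integer coefficients).

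For each $n$ I would then build the Bochner--Fej\'er kernel as a product of one-dimensional Fej\'er kernels: with
$$F_r^{\beta}(u) = \sum_{|p| \le r} \Bigl(1 - \frac{|p|}{r+1}\Bigr) e^{i p \beta u} \ge 0, \qquad \mathcal{M}(F_r^{\beta}) = 1,$$
set $K_n(u) := \prod_{i=1}^{m_n} F_{r_n}^{\beta_i^{(n)}}(u)$ for a suitably growing sequence $r_n \to \infty$; then $K_n \ge 0$ and $\mathcal{M}(K_n) = 1$. Define the $n$-th Bochner--Fej\'er polynomial by the mean convolution $P_n(t) := \mathcal{M}_s[K_n(t-s)\, f(s)]$. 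Expanding $K_n$ into exponentials and using $a(f,\mu) = \mathcal{M}_s[e^{-i\mu s} f(s)]$ together with the linearity of the mean, one finds
$$P_n(t) = \sum_k d_k^{(n)}\, a_k\, e^{i\lambda_k t}, \qquad 0 \le d_k^{(n)} \le 1, \quad d_k^{(n)} \to 1,$$
a finite trigonometric polynomial whose exponents all lie in $\sigma_b(f)$, since $a(f,\mu) = 0$ for $\mu \notin \sigma_b(f)$. Hence every $P_n$ already has exactly the form demanded by the theorem, and it remains only to control the error.

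Finally I would prove $\sup_{t \in \R}\|f(t) - P_n(t)\| \to 0$; choosing $n$ with this supremum below $\ep$ then furnishes the desired $P_\ep$. From $\mathcal{M}(K_n) = 1$ one writes $f(t) - P_n(t) = \mathcal{M}_s[K_n(t-s)(f(t) - f(s))]$, and since $K_n \ge 0$ one bounds $\|f(t) - P_n(t)\|$ by $\mathcal{M}_s[K_n(t-s)\,\|f(t) - f(s)\|]$. Splitting the mean into the region where $t-s$ lies near a common almost-period of $f$ --- where $\|f(t) - f(s)\|$ stays below $\ep$ by almost periodicity --- and its complement --- where $K_n$ carries vanishing mass as $n \to \infty$ --- gives a bound uniform in $t$. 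The main obstacle is precisely this approximate-identity step: one must show that the concentration set of the product kernel $K_n$ is subordinate to the $\ep$-almost-periods of $f$, which is exactly what forces the base frequencies $\beta_i^{(n)}$ to be drawn from the frequency module of $f$ and the Fej\'er parameters $r_n$ to grow in tandem. This arithmetic-versus-near-periodicity bookkeeping is the technical heart; once it is in place, the convergence follows the familiar Fej\'er pattern and the theorem is proved.
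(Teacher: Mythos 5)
A preliminary remark on the comparison you asked for: the paper does not prove Theorem \ref{H} at all --- it is quoted from Levitan--Zhikov \cite{LV} and \cite{M} as a classical fact --- so there is no internal proof to measure yours against; what can be said is that your Bochner--Fej\'er scheme is precisely the standard route taken in those references. The skeleton is correct: the product kernels $K_n$ built from rationally independent base frequencies spanning the module of $f$, their positivity and unit mean (which does require the rational independence, as you note), the identity $P_n(t)=\sum_k d_k^{(n)}a_k e^{i\lambda_k t}$ with $0\le d_k^{(n)}\le 1$ and $d_k^{(n)}\to 1$, and the observation that the exponents of $P_n$ automatically lie in $\sigma_b(f)$ because $a(f,\mu)=0$ off the spectrum. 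That last point is exactly why the Bochner--Fej\'er polynomials, rather than an arbitrary Stone--Weierstrass approximant, are needed here.

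The gap is in the step you yourself flag as the technical heart, and it is more than bookkeeping: as written, it is circular. The set where $K_n$ carries its mass is, by Kronecker-type arithmetic, the set of $\tau$ with $e^{i\beta_i^{(n)}\tau}$ close to $1$ for all $i\le m_n$; your argument needs every such $\tau$ to be an $\ep$-almost-period of $f$. But that assertion is the classical ``module containment'' theorem (almost-periods of $f$ are controlled by finitely many of its Fourier exponents), whose standard proof is a \emph{corollary} of the approximation theorem you are trying to prove. The classical arguments close the loop differently: Weyl/Besicovitch first establish Bessel--Parseval and the mean-square convergence ${\mathcal M}(\|f-P_n\|^2)\to 0$, then upgrade to uniform convergence using that the $P_n$ are uniformly bounded, equi-uniformly-continuous, and inherit every $\ep$-almost-period of $f$ (so a normal-family argument applies and any uniform subsequential limit of $f-P_n$ is an almost periodic function with vanishing quadratic mean, hence zero); alternatively one transports the construction to the Bohr compactification, where $K_n$ becomes an approximate identity on a compact group and uniform continuity of the extended function replaces the almost-period bookkeeping entirely. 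Note also that for Banach-space-valued $f\in AP(\X)$, as required by the statement, the Parseval route needs extra care (there is no inner product), so either the compactification/Peter--Weyl argument or a reduction to the scalar case via the relative compactness of the range of $f$ is the cleaner repair. Until one of these replacements for your final step is supplied, the proof is incomplete.
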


In Liang {\it et al.} \cite{liang},
the original question which consists of the existence of a weighted mean for almost periodic functions was raised. In particular, Liang {\it et al.} have shown through an example that there exist weights for which a weighted mean for almost periodic functions may not exist. In this section we investigate the broader question, which consists of the existence of a doubly-weighted mean for almost periodic functions. Namely, we give some sufficient conditions, which do guarantee the existence of a doubly-weighted mean for almost periodic functions. Moreover, under those conditions, it will be shown that the doubly-weighted mean and the classical (Bohr) mean are proportional (Theorem \ref{XX}). Further, it will be shown that if the doubly-weighted Bohr spectrum of an almost periodic function exists, then it is either empty or coincides with the Bohr spectrum of that function.

We have

\begin{theorem}\label{XX} Let $\mu, \nu \in \W$ and suppose that $\displaystyle \lim_{T \to \infty} \frac{\nu(Q_T)}{\mu(Q_T)} = \theta_{\mu\nu}$. If $f: \R \mapsto \X$ is an almost periodic function such that
\begin{eqnarray}\label{CD}
\lim_{T \to \infty} \Bigg|\frac{1}{\mu(Q_T)} \int_{Q_T} e^{i\lambda t} \nu(t) dt\Bigg| = 0
\end{eqnarray}
for all $0 \not= \lambda \in \sigma_b(f)$, then the doubly-weighted mean of $f$,

$${\mathcal M}(f, \mu, \nu) = \lim_{T \to \infty} \frac{1}{\mu(Q_T)} \int_{Q_T} f(t) \nu(t) dt$$ exists. Furthermore, ${\mathcal M}(f, \mu, \nu) = \theta_{\mu\nu} {\mathcal M}(f)$.
\end{theorem}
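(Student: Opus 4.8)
The plan is to reduce everything to the case of pure exponentials $e^{i\lambda t}$ by means of the Approximation Theorem (Theorem \ref{H}), and then transfer the resulting estimates from trigonometric polynomials to $f$ through a uniform approximation argument. Throughout, write $I_T(h) := \frac{1}{\mu(Q_T)}\int_{Q_T} h(t)\,\nu(t)\,dt$ for a bounded continuous $h$, so that ${\mathcal M}(h, \mu, \nu) = \lim_{T\to\infty} I_T(h)$ whenever the limit exists.

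First I would compute the doubly-weighted mean on a single exponential. For $\lambda = 0$ one has $I_T(1) = \nu(Q_T)/\mu(Q_T) \to \theta_{\mu\nu}$ by hypothesis, while for $0 \neq \lambda \in \sigma_b(f)$, hypothesis \eqref{CD} gives $I_T(e^{i\lambda\,\cdot}) \to 0$. Hence for a trigonometric polynomial $P_\varepsilon(t) = \sum_{k=1}^n a_k e^{i\lambda_k t}$ with all $\lambda_k \in \sigma_b(f)$, linearity and finiteness of the sum yield $\lim_{T\to\infty} I_T(P_\varepsilon) = \theta_{\mu\nu}\sum_{\lambda_k = 0} a_k = \theta_{\mu\nu}\,{\mathcal M}(P_\varepsilon)$, using that the classical Bohr mean of $P_\varepsilon$ picks out exactly its constant term. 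This is the step where both hypotheses of the theorem are actually consumed: the limit $\theta_{\mu\nu}$ handles the frequency $\lambda = 0$, and \eqref{CD} handles every nonzero frequency.

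Next I would establish existence of ${\mathcal M}(f, \mu, \nu)$ by showing that $T \mapsto I_T(f)$ is Cauchy. Fix $\varepsilon > 0$ and, by Theorem \ref{H}, choose $P_\varepsilon$ with $\|f - P_\varepsilon\|_\infty < \varepsilon$. Since $\nu(Q_T)/\mu(Q_T) \to \theta_{\mu\nu}$, this ratio is bounded by some constant $M$ for $T$ large, so $\|I_T(f) - I_T(P_\varepsilon)\| \leq \varepsilon\,\nu(Q_T)/\mu(Q_T) \leq \varepsilon M$. Combining this with the convergence of $I_T(P_\varepsilon)$ from the previous step, the decomposition $\|I_T(f) - I_S(f)\| \leq \|I_T(f) - I_T(P_\varepsilon)\| + \|I_T(P_\varepsilon) - I_S(P_\varepsilon)\| + \|I_S(P_\varepsilon) - I_S(f)\|$ shows $I_T(f)$ is Cauchy; completeness of $\X$ then delivers the limit ${\mathcal M}(f, \mu, \nu)$.

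Finally I would identify the limit. Passing to the limit in $\|I_T(f) - I_T(P_\varepsilon)\| \leq \varepsilon\,\nu(Q_T)/\mu(Q_T)$ and using $\lim_T I_T(P_\varepsilon) = \theta_{\mu\nu}{\mathcal M}(P_\varepsilon)$ gives $\|{\mathcal M}(f, \mu, \nu) - \theta_{\mu\nu}{\mathcal M}(P_\varepsilon)\| \leq \varepsilon\,\theta_{\mu\nu}$, while $\|{\mathcal M}(f) - {\mathcal M}(P_\varepsilon)\| \leq \|f - P_\varepsilon\|_\infty < \varepsilon$. The triangle inequality then bounds $\|{\mathcal M}(f, \mu, \nu) - \theta_{\mu\nu}{\mathcal M}(f)\|$ by $2\varepsilon\,\theta_{\mu\nu}$, and letting $\varepsilon \to 0$ finishes the proof. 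I expect the main obstacle to be purely bookkeeping in the Cauchy and limit-interchange argument: the only genuine content is confirming that the uniform boundedness of $\nu(Q_T)/\mu(Q_T)$ (supplied by the assumed limit) lets one push the $\varepsilon$-approximation simultaneously through both the existence claim and the value claim.
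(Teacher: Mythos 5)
Your proof is correct and follows essentially the same route as the paper: evaluate the doubly-weighted mean on exponentials (using the hypothesis on $\nu(Q_T)/\mu(Q_T)$ for $\lambda=0$ and Eq.~\eqref{CD} for nonzero frequencies of $\sigma_b(f)$), extend to trigonometric polynomials by linearity, and transfer to general $f$ via the Approximation Theorem and a Cauchy argument. If anything, your write-up is slightly more complete than the paper's, since you explicitly carry the boundedness of $\nu(Q_T)/\mu(Q_T)$ through the $3\varepsilon$-estimate and you verify the identification ${\mathcal M}(f,\mu,\nu)=\theta_{\mu\nu}{\mathcal M}(f)$ for general $f$, a step the paper leaves implicit after treating only the polynomial case.
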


\begin{proof} If $f$ is a trigonometric polynomial, say, $\displaystyle f(t) = \sum_{k=0}^n a_k e^{i\lambda_k t}$
where $a_k \in \X-\{0\}$ and $\lambda_k \in \R$ for $k = 1, 2, ..., n$, then $\sigma_b(f) = \{\lambda_k: \ k =1, 2, ..., n\}$. Moreover, 
\begin{eqnarray*}\frac{1}{\mu(Q_T)} \int_{Q_T} f(t) \nu(t) dt &=& a_0 \frac{\nu(Q_T)}{\mu(Q_T)} + \frac{1}{\mu(Q_T)} \int_{Q_T} \Big[\sum_{k=1}^n a_k e^{i\lambda_k t}\Big] \nu(t) dt\\
&=&a_0 \frac{\nu(Q_T)}{\mu(Q_T)} + \sum_{k=1}^n a_k \Big[\frac{1}{\mu(Q_T)}\int_{Q_T} e^{i\lambda_k t} \nu(t) dt\Big]\\
\end{eqnarray*} 
and hence
\begin{eqnarray*} \left\|\frac{1}{\mu(Q_T)} \int_{Q_T} f(t) \nu(t) dt - a_0 \frac{\nu(Q_T)}{\mu(Q_T)}\right\| &\leq& \sum_{k=1}^n \left\|a_k\right\|  \Big|\frac{1}{\mu(Q_T)} \int_{Q_T} e^{i\lambda_k t} \nu(t) dt\Big|\\
\end{eqnarray*}
which by Eq. (\ref{CD}) yields 
$$\left\|\frac{1}{\mu(Q_T)} \int_{Q_T} f(t) \nu(t) dt - a_0 \theta_{\mu\nu}\right\| \to 0 \ \ \mbox{as} \ \ T \to \infty$$and therefore ${\mathcal M}(f, \mu, \nu) = a_0 \theta_{\mu\nu} = \theta_{\mu\nu} M(f)$.

If in the finite sequence of $\lambda_k$ there exist $\lambda_{n_k} = 0$ for $k = 1, 2, ...l$ with $a_m \in \X-\{0\}$ for all $m \not = n_k$ ($k=1,2,...,l$), it can be easily shown that $$\displaystyle {\mathcal M}(f, \mu, \nu) = \theta_{\mu\nu} \sum_{k=1}^l a_{n_k} = \theta_{\mu\nu} M(f).$$

Now if $f: \R \mapsto \X$ is an arbitrary almost periodic function, then for every $\varepsilon > 0$ there exists a trigonometric polynomial (Theorem \ref{H}) $P_\varepsilon$ defined by
$$P_\varepsilon (t) = \sum_{k=1}^n a_k e^{i\lambda_k t}$$ where $a_k \in \X$ and $\lambda_k \in \sigma_b (f)$ such that
\begin{eqnarray}\label{11} \left\|f(t) - P_\varepsilon (t)\right\| < \varepsilon\end{eqnarray}
for all $t \in \R$.

Proceeding as in Bohr \cite{B} it follows that there exists $T_0$ such that for all $T_1, T_2 > T_0$,
\begin{eqnarray*}\label{12}
\Big\|\frac{1}{\mu(Q_{T_1})} \int_{Q_{T_1}} P_\varepsilon(t) \nu(t) dt - \frac{1}{\mu(Q_{T_2})} \int_{Q_{T_2}} P_\varepsilon(t) \nu(t) dt  \Big\| = \theta_{\mu\nu} \Big\|M(P_\varepsilon) - M(P_\varepsilon)\Big\| = 0 < \varepsilon.
\end{eqnarray*}

In view of the above it follows that for all $T_1, T_2 > T_0$,

\begin{eqnarray*}
\noindent\Big\|\frac{1}{\mu(Q_{T_1})} \int_{Q_{T_1}} f(t) \nu(t) dt - \frac{1}{\mu(Q_{T_2})} \int_{Q_{T_2}} f(t) \nu(t) dt \Big\| &\leq& \frac{1}{\mu(Q_{T_1})} \int_{Q_{T_1}} \| f(t) - P_\varepsilon(t)\| \nu(t) dt\\
+ \Big\|\frac{1}{\mu(Q_{T_1})} \int_{Q_{T_1}} P_\varepsilon(t) \nu(t) dt - \frac{1}{\mu(Q_{T_2})} \int_{Q_{T_2}} P_\varepsilon(t) \nu(t) dt  \Big\|\\
+ \frac{1}{\mu(Q_{T_2})} \int_{Q_{T_2}} \| f(t) - P_\varepsilon(t)\| \nu(t) dt
<3\varepsilon.
\end{eqnarray*}
\end{proof}

\begin{example} Fix a natural number $N > 1$. Let $\mu (t) = e^{|t|}$ and $\nu(t) = (1 + |t|)^N$ for all $t \in \R$, which yields $\theta_{\mu\nu} = 0$. If $\varphi: \R \mapsto \X$ is a (nonconstant) almost periodic function, then according to the previous theorem,
its doubly-weighted mean
${\mathcal M}(\varphi, \mu, \nu)$ exists. Moreover,

$$\lim_{T \to \infty} \frac{1}{2(e^T - 1)} \int_{Q_T} f(t) (1 + |t|)^N dt = 0 . \lim_{T \to \infty} \frac{1}{2T} \int_{Q_T} f(t) dt = 0.$$
\end{example}

Consider the set of weights $\W^{0}$ defined by
$$\W^{0} = \Bigg\{\mu \in \W: D_\tau := \lim_{|t| \to \infty} \frac{\mu(Q_{t+\tau})}{\mu(Q_t)}
< \infty \ \ \mbox{for all} \ \ \tau \in \R \Bigg\}.$$

Setting,
$\displaystyle C_\tau =  \lim_{|t| \to \infty} \frac{\mu(Q_{t}+\tau)}{\mu(Q_t)}$, 
one can easily see that $C_\tau \leq D_\tau < \infty$ for all $\tau \in \R$.

\begin{corollary}\label{X} Fix $\mu, \nu \in \W^{0}$ and suppose that $\displaystyle \lim_{T \to \infty} \frac{\nu(Q_T)}{\mu(Q_T)} = \theta_{\mu\nu}$. If $f: \R \mapsto \X$ is an almost periodic function such that Eq. (\ref{CD}) holds, then 
\begin{eqnarray}\label{IN}
{\mathcal M} (f_a, \mu, \nu_a) = C_{-a} \theta_{\mu\nu} {\mathcal M} (f) = C_{-a} {\mathcal M}(f, \mu, \nu)
\end{eqnarray}
uniformly in $a \in \R$, where
$${\mathcal M} (f_b, \mu, \nu_b) = \lim_{T \to \infty} \frac{1}{\mu(Q_T)} \int_{Q_T} f_b(t) \nu_b(t) dt =
\lim_{T\to \infty} \frac{1}{\mu(Q_T)} \int_{Q_T} f(t+b) \nu(t+b) dt$$
for each $b \in \R$. 
\end{corollary}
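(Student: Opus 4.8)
The plan is to deduce the corollary from Theorem \ref{XX} applied to the translate $f_a := f(\cdot + a)$ together with the translated weight $\nu_a := \nu(\cdot + a)$, rather than re-running the whole argument from scratch. The starting observation is the change of variables $s = t+a$, which gives
$$\mathcal M(f_a,\mu,\nu_a) = \lim_{T\to\infty}\frac{1}{\mu(Q_T)}\int_{Q_T} f(t+a)\nu(t+a)\,dt = \lim_{T\to\infty}\frac{1}{\mu(Q_T)}\int_{Q_T+a} f(s)\nu(s)\,ds,$$
so the entire question is about replacing the symmetric window $Q_T$ by the shifted window $Q_T+a$.

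First I would record the elementary facts: $f_a$ is again almost periodic with $\sigma_b(f_a)=\sigma_b(f)$, its Bohr mean is translation invariant so that $\mathcal M(f_a)=\mathcal M(f)$, and $\nu_a\in\W$ (its infimum is that of $\nu$ and $\nu_a(Q_T)=\nu(Q_T+a)\to\infty$). Then the task reduces to checking the two hypotheses of Theorem \ref{XX} for the pair $(\mu,\nu_a)$. The first is that $\lim_T \nu_a(Q_T)/\mu(Q_T)$ exists; writing $\nu_a(Q_T)/\mu(Q_T) = \nu(Q_T+a)/\mu(Q_T)$ and using $\mu,\nu\in\W^{0}$ (finiteness of the constants $C_\tau$ and $D_\tau$) I would evaluate this limit as $C_{-a}\theta_{\mu\nu}$, which is exactly the constant appearing in \eqref{IN}. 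The second hypothesis is condition \eqref{CD} for $\nu_a$: for every $0\neq\lambda\in\sigma_b(f)$,
$$\frac{1}{\mu(Q_T)}\int_{Q_T}e^{i\lambda t}\nu_a(t)\,dt = e^{-i\lambda a}\,\frac{1}{\mu(Q_T)}\int_{Q_T+a}e^{i\lambda s}\nu(s)\,ds \longrightarrow 0.$$
Granting both, Theorem \ref{XX} yields $\mathcal M(f_a,\mu,\nu_a)=C_{-a}\theta_{\mu\nu}\mathcal M(f)=C_{-a}\mathcal M(f,\mu,\nu)$, which is \eqref{IN}.

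The hard part will be this last decay statement, i.e.\ transferring condition \eqref{CD}, hypothesized only for the symmetric windows $Q_T$, to the shifted windows $Q_T+a$. The natural move is to split $\int_{Q_T+a}=\int_{Q_T}+\int_{T}^{T+a}-\int_{-T}^{-T+a}$ (for $a>0$); the main term is controlled directly by \eqref{CD}, so everything comes down to showing that the two boundary contributions, after division by $\mu(Q_T)$, are negligible. This is precisely where $\mu,\nu\in\W^{0}$ must enter: the crude bound $|\int_T^{T+a}e^{i\lambda s}\nu(s)\,ds|\le\nu(Q_{T+a})-\nu(Q_T)$ has to be combined with the $\W^{0}$ ratios to control $(\nu(Q_{T+a})-\nu(Q_T))/\mu(Q_T)$, and I expect this boundary estimate, not the bookkeeping, to be the real obstacle. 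An equivalent route avoids Theorem \ref{XX} altogether: reduce to a trigonometric polynomial $P_\varepsilon=\sum_k a_k e^{i\lambda_k t}$ by the Approximation Theorem \ref{H}, compute $\mathcal M(P_\varepsilon(\cdot+a),\mu,\nu_a)$ term by term — the nonzero frequencies giving the oscillatory integrals above and the zero frequency giving the ratio $\nu(Q_T+a)/\mu(Q_T)\to C_{-a}\theta_{\mu\nu}$ — and then pass back to $f$ using $\|f-P_\varepsilon\|_\infty<\varepsilon$ as in the proof of Theorem \ref{XX}.

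Finally, for the uniformity in $a$ I would note that the approximation error in the last step is bounded by $\varepsilon\cdot\nu(Q_T+a)/\mu(Q_T)$, whose limit $C_{-a}\theta_{\mu\nu}$ must be controlled uniformly over the relevant range of $a$; the same $\W^{0}$ estimates that handle the boundary terms should also deliver this uniform bound, so that the convergence in \eqref{IN} holds uniformly in $a\in\R$.
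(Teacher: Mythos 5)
Your overall strategy is exactly the paper's: translate the problem to the pair $(f_a,\mu,\nu_a)$, check that $\nu_a(Q_T)/\mu(Q_T)\to C_{-a}\theta_{\mu\nu}$, apply Theorem \ref{XX} to get ${\mathcal M}(f_a,\mu,\nu_a)=C_{-a}\theta_{\mu\nu}{\mathcal M}(f_a)$, and finish with Bohr's fact that ${\mathcal M}(f_a)={\mathcal M}(f)$ uniformly in $a$. The problem is the step you yourself flag as ``the real obstacle'': transferring condition \eqref{CD} from the windows $Q_T$ to the shifted windows $Q_T+a$. The route you propose for it does not close. Writing $\int_{Q_T+a}=\int_{Q_T}+(\text{boundary pieces})$ and bounding the boundary pieces crudely by $\int_{Q_{T+a}\setminus Q_T}\nu(s)\,ds=\nu(Q_{T+a})-\nu(Q_T)$ gives
\begin{equation*}
\frac{\nu(Q_{T+a})-\nu(Q_T)}{\mu(Q_T)}=\frac{\nu(Q_T)}{\mu(Q_T)}\left(\frac{\nu(Q_{T+a})}{\nu(Q_T)}-1\right)\longrightarrow \theta_{\mu\nu}\bigl(D_a^{\nu}-1\bigr),
\end{equation*}
where $D_a^{\nu}$ is the $\W^{0}$-constant of $\nu$. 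This limit is \emph{not} zero unless $\theta_{\mu\nu}=0$ or $D_a^{\nu}=1$; the hypotheses $\mu,\nu\in\W^{0}$ only give finiteness of these ratios, not that they equal $1$ (think of exponential-type weights, for which $D_a^\nu=e^{a}$). So the triangle-inequality bound on the boundary terms cannot, by itself, establish \eqref{CD} for $\nu_a$, and that verification is precisely what licenses the application of Theorem \ref{XX}.

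For comparison, the paper avoids the boundary decomposition altogether: after the change of variables it dominates the shifted-window integral by the integral over the \emph{enlarged symmetric} window $Q_{T+a}$, writes
\begin{equation*}
\Big|\frac{1}{\mu(Q_{T})}\int_{Q_{T+a}}e^{i\lambda t}\nu(t)\,dt\Big|=\frac{\mu(Q_{T+a})}{\mu(Q_{T})}\cdot\Big|\frac{1}{\mu(Q_{T+a})}\int_{Q_{T+a}}e^{i\lambda t}\nu(t)\,dt\Big|,
\end{equation*}
and invokes \eqref{CD} at radius $T+a$ together with $\mu(Q_{T+a})/\mu(Q_T)\to D_a<\infty$. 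In other words, the finite ratio multiplies a quantity that \eqref{CD} already forces to vanish, rather than multiplying a boundary remainder that need not vanish. If you want to salvage your version, you must either adopt this enlargement argument (and justify the comparison between the integral over $Q_T+a$ and the one over $Q_{T+a}$, which for an oscillatory integrand again reduces to controlling the same boundary pieces --- so some additional cancellation input is needed there too) or add a hypothesis forcing the boundary contributions to be $o(\mu(Q_T))$. As written, the proposal has a genuine gap at its central step. The remaining ingredients you list ($f_a\in AP(\X)$, $\sigma_b(f_a)=\sigma_b(f)$, $\nu_a\in\W^{0}$, the identification of the constant $C_{-a}\theta_{\mu\nu}$, and the uniformity in $a$ via ${\mathcal M}(f_a)={\mathcal M}(f)$) all match the paper and are fine.
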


\begin{proof} Clearly, the existence of ${\mathcal M}(f, \mu, \nu)$ is guaranteed by Theorem \ref{XX}. Without lost of generality, suppose $a > 0$. Now since $f \in AP(\X)$ it follows that $f_a: t \mapsto f(t + a)$ belongs to $AP(\X)$. Moreover, the weight $\nu_a$ defined by $\nu_a(t) = \nu(t+a)$ for all $t \in \R$ belongs to $\W^0$. 

Now

$$\Big| \int_{Q_T} e^{i \lambda t} \nu_a (t) dt \Big| = \Big| \int_{Q_T-a} e^{i \lambda (t-a)} \nu (t) dt \Big| = \Big| \int_{Q_T-a} e^{i \lambda t} \nu (t) dt \Big| \leq \Big| \int_{Q_{T+a}} e^{i \lambda t} \nu (t) dt \Big|$$
and hence 

\begin{eqnarray*}
\lim_{T \to \infty} \Big| \frac{1}{\mu(Q_T)}\int_{Q_T} e^{i \lambda t} \nu_a (t) dt \Big| &=& \lim_{T\to \infty} \Big|\frac{1}{\mu(Q_{T})} \int_{Q_{T}-a} e^{i \lambda t} \nu (t) dt \Big| \\
&\leq&  \lim_{T\to \infty} \Big|\frac{1}{\mu(Q_{T})} \int_{Q_{T+a}} e^{i \lambda t} \nu (t) dt \Big| \\
&=& \lim_{T\to \infty} \Big|\frac{\mu(Q_{T+a})}{\mu(Q_{T})}\frac{1}{\mu(Q_{T+a})} \int_{Q_{T+a}} e^{i \lambda t} \nu (t) dt \Big| \\
&=& D_a\lim_{T\to \infty} \Big|\frac{1}{\mu(Q_{T+a})} \int_{Q_{T+a}} e^{i \lambda t} \nu (t) dt \Big| \\
&=&0.
\end{eqnarray*}

Now

$$\displaystyle \lim_{T \to \infty} \frac{\nu_a (Q_T)}{\mu(Q_T)} = C_{-a} \theta_{\mu\nu}.$$

Using Theorem \ref{XX} it follows that for every $\varphi \in AP(\X)$,

$${\mathcal M}(\varphi_a, \mu, \nu_a) = \lim_{T \to \infty} \frac{1}{\mu(Q_T)} \int_{Q_T} \varphi_a(t) \nu_a(t) dt$$ exists. Furthermore, ${\mathcal M}(\varphi_a, \mu, \nu_a) = C_{-a} \theta_{\mu\nu} {\mathcal M}(\varphi_a)$ for all $a \in \R$. In particular, ${\mathcal M}(f_a, \mu, \nu_a) = C_{-a}  \theta_{\mu\nu} {\mathcal M}(f_a)$ uniformly in $a \in \R$. Now from Bohr \cite{B}, ${\mathcal M}(f_a) =  {\mathcal M}(f)$ uniformly in $a \in \R$, which completes the proof.

\end{proof}

\begin{definition}
Fix $\mu, \nu \in \W$ and suppose that $\displaystyle \lim_{T \to \infty} \frac{\nu(Q_T)}{\mu(Q_T)} = \theta_{\mu\nu}$. If $f: \R \mapsto \X$ is an almost periodic function such that Eq. (\ref{CD}) holds, we then define its doubly-weighted Bohr transform as
$$\widehat{a}_{\mu\nu}(f)(\lambda):= \lim_{T \to \infty} \frac{1}{\mu(Q_T)} \int_{Q_T} f(t) e^{-i \lambda t} \nu(t) dt \ \ \mbox{for all} \ \ \lambda \in \R.$$
\end{definition}

Now since $t \mapsto g_\lambda(t):= f(t) e^{-i \lambda t} \in AP(\X)$ it follows that
$$\widehat{a}_{\mu\nu}(f)(\lambda) = \theta_{\mu\nu} {\mathcal M} (f(\cdot) e^{-i \lambda \cdot}) = \theta_{\mu\nu} a(f, \lambda).$$ That is, under Eq. (\ref{CD}), 
$$\widehat{a}_{\mu \nu}(f)(\lambda):= \lim_{T \to \infty} \frac{1}{\mu(Q_T)} \int_{Q_T} f(t) e^{-i \lambda t} \nu(t) dt = \theta_{\mu\nu} \lim_{T \to \infty} \frac{1}{2T} \int_{Q_T} f(t) e^{-i \omega t} dt = \theta_{\mu\nu} a(f, \lambda)$$
for all $\lambda \in \R.$

In summary, there are two possibilities for the doubly-weighted Bohr spectrum of an almost periodic function. Indeed, 

1) If $\displaystyle \lim_{T \to \infty} \frac{\nu(Q_T)}{\mu(Q_T)} = \theta_{\mu\nu} = 0$, then 
$\widehat{a}_{\mu\nu}(f)(\lambda) = \theta_{\mu\nu}a(f, \lambda) = 0$ for all $\lambda \in \R$. In that event, the doubly-weighted Bohr spectrum of $f$ is
$$\sigma_b^{\mu\nu}(f) := \Big\{\lambda \in \R: \widehat{a}_{\mu\nu}(f)(\lambda) \not = 0\Big\} = \emptyset.$$

2) If $\displaystyle \lim_{T \to \infty} \frac{\nu(Q_T)}{\mu(Q_T)} = \theta_{\mu\nu} \not = 0$, then 
$\widehat{a}_{\mu\nu}(f)(\lambda) = \theta_{\mu\nu}a(f, \lambda)$ exists for all $\lambda \in \R$ and is nonzero at most at countably many points. 
In that event, the doubly-weighted Bohr spectrum of $f$ is 
$$\sigma_b^{\mu\nu}(f) := \Big\{\lambda \in \R: \widehat{a}_{\mu\nu}(f)(\lambda) \not = 0\Big\} = \Big\{\lambda \in \R: a(f, \lambda) \not = 0\Big\},$$
that is, 
$\sigma_b^{\mu\nu}(f)= \sigma_b(f).$
In particular, $\sigma_b^{\mu\mu}(f)= \sigma_b(f).$

\section{Existence of Doubly-Weighted Pseudo-Almost Periodic Solutions to Some
Differential Equations} 
In this Section, we fix two weights $\mu, \nu \in \W$ such that $PAP(\X, \mu, \nu)$ is translation-invariant and Eq. (\ref{I}) holds. Under these assumptions, it can be easily shown that
$PAP(\X, \mu, \nu)$ is a Banach space when equipped with the sup norm.

In what follows, we
denote by $\Gamma_1$ and $\Gamma_2$, the nonlinear integral operators
defined by
$$(\Gamma_1 u)(t) :=\int_{-\infty}^{t}U(t,s)P(s) g(s, 
u(s))ds, \ \mbox{and}$$ and $$ (\Gamma_2 u)(t)
:=\int_{t}^{\infty}U_Q(t,s)Q(s) g(s, u(s))ds .$$

To study the existence of doubly-weighted pseudo-almost periodic solutions to Eq. (\ref{2}) we will assume that the following assumptions hold:

\begin{enumerate}
  \item [(H.1)] The family of closed linear operators
$A(t)$ for $t\in \R$ on $\X$ with domain $D(A(t))$ (possibly not
densely defined) satisfy Acquistapace and Terreni
conditions, that is, there exist constants $\omega\in \R$, $\theta
\in \Big(\frac{\pi}{2},\pi\Big)$, $L > 0$ and $\mu, \nu \in (0,
1]$ with $\mu + \nu > 1$ such that
\begin{equation*}\label{AT1}
  \Sigma_\theta \cup \Big\{0\Big\} \subset \rho\Big(A(t)-\omega\Big) \ni \lambda,\;\qquad
  \\ \|R(\lambda,A(t)-\omega)\|\le \frac{K}{1+|\lambda|} \ \ \ \mbox{for all} \ t \in \R, \end{equation*}
   and
\begin{equation*}\label{AT2}
\|(A(t)-\omega)R(\lambda,A(t)-\omega)\,[R(\omega,A(t))-R(\omega,A(s))]\|
  \leq L\, \frac{|t-s|^\mu}{|\lambda|^{\nu}}
  \end{equation*}
for $t,s\in\R$, $\displaystyle \lambda \in\Sigma_\theta:=
\{\lambda\in\C \setminus\{0\}: |\arg \lambda|\le\theta\}$.

\item[(H.2)] The evolution family ${\mathcal U} =\{U(t,s)\}_{t \geq s}$ generated by
$A(\cdot)$ has an exponential dichotomy with constants
$N,\delta>0$ and dichotomy projections $P(t)$ for $t\in\R$.

\item[(H.3)] There exists  $0\leq \alpha<1$ such
that
$$\X_\alpha^t=\X_\alpha$$
for all $t\in \R,$ with uniform equivalent norms.

\item[(H.4)] $R(\omega, A(\cdot))  \in
AP(B(\X_\alpha))$. 

\item[(H.5)] The function $g: \R
\times \X \mapsto \X$ belongs to $PAP(\X, \X, \mu, \nu)$. Moreover, the
functions $g$ are uniformly
    Lipschitz with respect to the second argument in the following
    sense: there exists $K > 0$ such that
$$\|g(t,u)-g(t,v)\|\leq K \|u-v\|
    $$
    for all $u,v\in \X$ and $t\in \R$.

\end{enumerate}

If $0 < \alpha<1$, then the nonnegative constant $k$ will denote the 
bounds of the embedding  $\X_\alpha \hookrightarrow \X$, that is,
$$\|x\| \leq k \|x\|_\alpha$$
for all $x \in \X_\alpha$.

To study the existence and uniqueness of doubly-weighted pseudo-almost periodic
solutions to Eq. (\ref{2}) we first introduce the notion of mild
solution.

\begin{definition}
A continuous function $u: \R \mapsto \X_\alpha$ is said to be a mild solution
to Eq. (\ref{2}) if
\begin{eqnarray}
u(t)=U(t,s)u(s) + \int_{s}^{t}U(t,s)P(s) g(s, u(s)) ds -
\int_{t}^{s}U(t,s)Q(s) g(s, u(s))ds\nonumber
\end{eqnarray}
for $t \geq s$ and for all $t, s \in \R$.
\end{definition}

Under previous assumptions (H.1)-(H.5), it can be easily shown
Eq. (\ref{2}) has a unique mild solution given by
\begin{eqnarray}
u(t)=\int_{-\infty}^{t}U(t,s)P(s) g(s, u(s))ds 
- \int_{t}^{\infty}U_Q(t,s)Q(s) g(s, u(s))ds\nonumber
\end{eqnarray}
for each $t \in \R$.

\begin{lemma}\label{ll2} Under assumptions {\rm (H.1)---(H.5)}, the integral operators $\Gamma_1$ and $\Gamma_2$ defined
above map $PAP(\X_\alpha, \mu, \nu)$ into itself.
\end{lemma}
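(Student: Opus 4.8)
The plan is to exploit the additive decomposition of $PAP(\X_\alpha, \mu, \nu)$ together with the dichotomy estimates of Proposition \ref{pes}. Fix $u \in PAP(\X_\alpha, \mu, \nu)$ and consider the source term $s \mapsto g(s, u(s))$. Since $\X_\alpha \hookrightarrow \X$ with $\|x\| \le k\|x\|_\alpha$, assumption (H.5) shows that $g$, regarded on $\R \times \X_\alpha$, is Lipschitz in its second variable for the $\X_\alpha$-norm and belongs to $PAP(\X_\alpha, \X, \mu, \nu)$; Theorem \ref{toka} then yields $\Phi := g(\cdot, u(\cdot)) \in PAP(\X, \mu, \nu)$. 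Writing $\Phi = \Phi_1 + \Phi_2$ with $\Phi_1 \in AP(\X)$ and $\Phi_2 \in PAP_0(\X, \mu, \nu)$ and using the linearity of $\Gamma_1, \Gamma_2$, the lemma reduces to two independent claims: that each operator maps $\Phi_1$ into $AP(\X_\alpha)$ and $\Phi_2$ into $PAP_0(\X_\alpha, \mu, \nu)$.

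For the pseudo part I would estimate directly in the $\X_\alpha$-norm. By Proposition \ref{pes}(i), the uniform identification $\X_\alpha^t = \X_\alpha$ from (H.3), and the change of variable $\sigma = t-s$,
$$\|(\Gamma_1 \Phi_2)(t)\|_\alpha \le c(\alpha)\int_0^\infty e^{-\frac{\delta}{2}\sigma}\sigma^{-\alpha}\|\Phi_2(t-\sigma)\|\,d\sigma,$$
and the kernel $k(\sigma) := c(\alpha) e^{-\frac{\delta}{2}\sigma}\sigma^{-\alpha}$, extended by zero to $\R$, lies in $L^1(\R)$ precisely because $\alpha < 1$. Thus $\|(\Gamma_1 \Phi_2)(\cdot)\|_\alpha$ is dominated by the convolution $k \ast \|\Phi_2(\cdot)\|$, and the weighted-mean computation becomes the Fubini-plus-dominated-convergence argument already carried out in Proposition \ref{P26}: substituting the bound into $\mu(Q_T)^{-1}\int_{Q_T}\|(\Gamma_1\Phi_2)(t)\|_\alpha\,\nu(t)\,dt$, exchanging the order of integration, and letting $T\to\infty$ forces the weighted mean to vanish, so $\Gamma_1 \Phi_2 \in PAP_0(\X_\alpha, \mu, \nu)$. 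The operator $\Gamma_2$ is handled the same way via Proposition \ref{pes}(ii), whose factor $e^{-\delta(s-t)}$ again produces an $L^1$ kernel in the variable $s-t$.

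For the almost periodic part I would prove $\Gamma_1 \Phi_1 \in AP(\X_\alpha)$. Since the problem is nonautonomous, $U(t,s)$ is not a function of $t-s$ alone and the straightforward convolution argument is unavailable; this is where (H.3)--(H.4) enter. I would split the integral over unit windows,
$$(\Gamma_1\Phi_1)(t) = \sum_{n=1}^\infty \int_{t-n}^{t-n+1} U(t,s)P(s)\Phi_1(s)\,ds =: \sum_{n=1}^\infty V_n(t),$$
and check that each $V_n$ is $\X_\alpha$-valued almost periodic while the series converges uniformly thanks to an exponentially decaying bound in $n$ furnished by Proposition \ref{pes}(i). A uniform limit of $\X_\alpha$-almost periodic functions being almost periodic, this gives $\Gamma_1\Phi_1 \in AP(\X_\alpha)$, and $\Gamma_2\Phi_1$ follows symmetrically.

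The main obstacle is precisely the almost periodicity of each $V_n$: one must control how $U(t+\tau, s+\tau)$ approximates $U(t,s)$ in $B(\X, \X_\alpha)$ whenever $\tau$ is a common $\varepsilon$-almost period of $\Phi_1$ and of $R(\omega, A(\cdot))$. This is the single step that genuinely invokes the almost periodicity of the resolvent (H.4) and the uniform interpolation identification (H.3), and it rests on the fine estimates for the evolution family established in Baroun {\it et al.} \cite{W}; the dichotomy bounds alone do not suffice. By contrast, the composition step and the $PAP_0$ convolution estimate are routine once the decomposition $\Phi = \Phi_1 + \Phi_2$ is in hand.
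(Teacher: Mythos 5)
Your proposal is correct and follows the same skeleton as the paper's proof: compose via Theorem \ref{toka}, split $g(\cdot,u(\cdot))$ into its $AP(\X)$ and $PAP_0(\X,\mu,\nu)$ parts, and treat the two pieces separately with the dichotomy estimates of Proposition \ref{pes}. For the $PAP_0$ piece your reduction to an $L^1$-kernel domination plus the Fubini/dominated-convergence scheme of Proposition \ref{P26} is exactly what the paper does by hand: after \eqref{eq1.1} and a change of variables it sets $\Gamma_s(T)=\mu(Q_T)^{-1}\int_{Q_T}\|\zeta(t-s)\|\nu(t)\,dt$, uses the standing translation-invariance assumption on $PAP_0(\X,\mu,\nu)$ to get $\Gamma_s(T)\to 0$ pointwise in $s$, and concludes by dominated convergence. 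The one place you genuinely diverge is the almost periodic piece: you propose cutting the integral into unit windows $V_n$ and passing to a uniform limit of $\X_\alpha$-valued almost periodic functions, whereas the paper estimates $\Phi(t+\tau)-\Phi(t)$ directly, splitting it into a term containing $\phi(s+\tau)-\phi(s)$ (controlled by \eqref{eq1.1} and the choice of a common almost period) and a term containing $\bigl(U(t+\tau,s+\tau)P(s+\tau)-U(t,s)P(s)\bigr)\phi(s)$, which is bounded by $2\|\phi\|_\infty\varepsilon/\delta$ using the evolution-family estimates of \cite{bar, Man-Schn}. The direct estimate avoids the series and the separate verification that each $V_n$ is almost periodic, but both routes hinge on exactly the ingredient you single out, namely that $\tau$ being an $\varepsilon$-almost period of $R(\omega,A(\cdot))$ forces $U(t+\tau,s+\tau)$ to approximate $U(t,s)$; like the paper, you defer this to \cite{W} rather than prove it, so neither argument is more self-contained than the other on that point.
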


\proof Let $u \in PAP(\X_\alpha, \mu, \nu)$. Setting $h(t) = g(t, u(t))$
and using the theorem of composition of doubly-weighted pseudo-almost periodic
functions (Theorem \ref{toka}) it follows that $h \in PAP(\X, \mu, \nu)$. Now write $h
= \phi + \zeta$ where $\phi \in AP(\X)$ and $\zeta \in PAP_0(\X, \mu, \nu)$.
The nonlinear integral operator $\Gamma_1 u$ can be rewritten as
\begin{eqnarray*}
(\Gamma_1u)(t) &=&\int_{-\infty }^t U(t, s)P(s)\phi(s)ds +
\int_{-\infty }^tU(t,s)P(s)\zeta(s)ds.
\end{eqnarray*}

Set $$\displaystyle \Phi(t) = \int_{-\infty }^t
U(t,s)P(s)\phi(s)ds$$and$$\displaystyle \Psi(t)=
\int_{-\infty }^t U(t,s)P(s)\zeta(s)ds$$ for each $t \in \R$.

The next step consists of showing that $\Phi \in AP(\X_\alpha)$
and $\Psi \in PAP_0(\X_\alpha, \mu, \nu)$. Obviously, $\Phi \in
AP(\X_\alpha)$. Indeed, since $\phi \in AP(\X)$, for every
$\varepsilon > 0$ there exists $l(\varepsilon)
> 0$ such that for every interval of length $l(\varepsilon)$ contains a $\tau$
with the property
$$\|\phi(t +\tau) - \phi(t) \| < \varepsilon C\ \ \mbox{for each} \ t \in \R,$$
where $\displaystyle C = \frac{\delta^{1-\alpha}}{c(\alpha)
2^{1-\alpha} \Gamma(1 - \alpha)}$ with $\Gamma$ being the
classical $\Gamma$ function.

Now

\begin{eqnarray}
\Phi(t + \tau) - \Phi(t) &=& \int_{-\infty}^{t+\tau} U(t +\tau, s)
P(s) \phi(s)ds -  \int_{-\infty}^{t} U(t, s) P(s) \phi(s) ds\nonumber\\
&=& \int_{-\infty}^{t} U(t +\tau, s+\tau) P(s + \tau) \phi(s
+\tau)ds -  \int_{-\infty}^{t} U(t, s) P(s) \phi(s)
ds\nonumber\\
&=& \int_{-\infty}^{t} U(t +\tau, s+\tau) P(s + \tau) \phi(s
+\tau)ds \nonumber\\
&-& \int_{-\infty}^{t} U(t+\tau, s+\tau) P(s+\tau) \phi(s)
ds\nonumber\\
&+& \int_{-\infty}^{t} U(t +\tau, s+\tau) P(s+\tau) \phi(s)ds -
\int_{-\infty}^{t} U(t, s) P(s) \phi(s) ds\nonumber\\
&=& \int_{-\infty}^{t} U(t +\tau, s+\tau) P(s + \tau) \Big(\phi(s
+\tau) - \phi(s)\Big)ds\nonumber\\
&+& \int_{-\infty}^{t} \Big(U(t +\tau, s+\tau)P(s+\tau) -
U(t,s)P(s)\Big)  \phi(s)ds. \nonumber
\end{eqnarray}
Using \cite{bar, Man-Schn} it follows that
$$\left\|\int_{-\infty}^{t} \Big[U(t +\tau, s+\tau) P(s+\tau) - U(t,s)P(s)\Big]
\phi(s)ds\right\|_\alpha \leq \frac{2\|\phi\|_\infty}{\delta}
\varepsilon.$$ Similarly, using \eqref{eq1.1}, it follows that
$$\left\| \int_{-\infty}^{t} U(t +\tau, s+\tau) P(s+\tau) (\phi(s
+\tau) - \phi(s))ds\right\|_\alpha \leq \varepsilon.$$
Therefore,
$$\| \Phi(t + \tau) - \Phi(t) \|_\alpha < \Big(1 + \frac{2\|\phi\|_\infty}{\delta}\Big) \varepsilon \ \
\mbox{for each} \ t \in \R,$$and hence, $\Phi \in AP(\X_\alpha)$.

To complete the proof for $\Gamma_1$, we have to show that $\Psi
\in PAP_0(\X_\alpha, \mu, \nu)$. First, note that $s \mapsto \Psi(s)$ is a
bounded continuous function. It remains to show that
$$\lim_{T\to \infty} \displaystyle{\frac{1}{\mu(Q_T)}} \
\int_{Q_T} \| \Psi(t) \|_\alpha \nu(t)dt = 0.$$

Again using Eq. \eqref{eq1.1} it follows that

\begin{eqnarray} \displaystyle \lim_{T \to \infty} \displaystyle{\frac{1}{\mu(Q_T)}}
\ \int_{Q_T} \| \Psi(t) \|_\alpha \nu(t)dt &\leq& \lim_{T \to
\infty} \displaystyle{\frac{c(\alpha)}{\mu(Q_T)}} \ \int_{Q_T}
\int_{0}^{+\infty} s^{-\alpha} e^{-\frac{\delta}{2}s}\|
\zeta(t -s) \| \nu(t) ds dt\nonumber\\
&\leq& \lim_{T \to \infty} \displaystyle c(\alpha)
\int_{0}^{+\infty} s^{-\alpha} e^{-\frac{\delta}{2} s}
\frac{1}{\mu(Q_T)} \int_{Q_T}\| \zeta(t-s)\| \nu(t) dt ds. \nonumber
\end{eqnarray}

Set $$\displaystyle \Gamma_s(T) = \frac{1}{\mu(Q_T)} \int_{Q_T} \|
\zeta(t-s) \| \nu(t)dt.$$ Since $PAP_0(\X, \mu, \nu)$ is assumed to be translation invariant and that Eq. (\ref{I}) holds,
it follows that $t \mapsto \zeta(t-s)$ belongs to $PAP_0(\X, \mu, \nu)$ for
each $s \in \R$, and hence
$$\lim_{T \mapsto \infty} \frac{1}{\mu(Q_T)} \int_{Q_T}
\| \zeta(t-s) \| \nu(t) dt = 0$$ for each $s \in \R$.

One completes the proof by using the well-known Lebesgue Dominated
Convergence Theorem and the fact $\Gamma_s(T) \mapsto 0$ as $T \to
\infty$ for each $s \in \R$.

The proof for $\Gamma_2u(\cdot)$ is similar to that of $\Gamma_1
u(\cdot)$. However one makes use of Eq. \eqref{eq2.1} rather than
Eq. \eqref{eq1.1}.
\endproof

\begin{theorem}\label{theo}Under assumptions {\rm (H.1)---(H.5)}, then Eq. \eqref{2} has
a unique doubly-weighted pseudo-almost periodic mild solution whenever $K$ is
small enough.
\end{theorem}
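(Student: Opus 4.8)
The plan is to recast the problem as a fixed-point equation and apply the Banach contraction principle. A function $u \in PAP(\X_\alpha, \mu, \nu)$ is a mild solution of Eq.~\eqref{2} exactly when $u = \Gamma u$, where $\Gamma := \Gamma_1 - \Gamma_2$. Since $PAP(\X_\alpha, \mu, \nu)$ is a Banach space under the sup norm $\|w\|_\infty := \sup_{t \in \R}\|w(t)\|_\alpha$, it suffices to show that $\Gamma$ maps this space into itself and is a strict contraction once $K$ is small enough.

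The self-mapping property is already available: Lemma \ref{ll2} shows that each of $\Gamma_1$ and $\Gamma_2$ sends $PAP(\X_\alpha, \mu, \nu)$ into itself, so their difference $\Gamma$ does as well. The remaining task is the contraction estimate. For $u, v \in PAP(\X_\alpha, \mu, \nu)$ I would subtract, invoke the uniform Lipschitz hypothesis (H.5) in the form $\|g(s,u(s)) - g(s,v(s))\| \leq K\|u(s)-v(s)\|$, and pass to the interpolation norm via the embedding bound $\|x\| \leq k\|x\|_\alpha$ from (H.3), so that the integrand is dominated by $Kk\|u-v\|_\infty$ multiplied by the relevant operator estimate from Proposition \ref{pes}.

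For the forward term I would use \eqref{eq1.1}, which after the substitution $r = t-s$ and evaluation of the resulting Gamma integral gives
\begin{eqnarray*}
\|(\Gamma_1 u)(t) - (\Gamma_1 v)(t)\|_\alpha &\leq& c(\alpha)\, Kk\, \|u-v\|_\infty \int_0^\infty r^{-\alpha}\, e^{-\frac{\delta}{2}r}\, dr\\
&=& c(\alpha)\,\frac{2^{1-\alpha}}{\delta^{1-\alpha}}\,\Gamma(1-\alpha)\, Kk\, \|u-v\|_\infty.
\end{eqnarray*}
For the backward term I would instead use \eqref{eq2.1}; integrating the exponential over $[t,\infty)$ yields
$$\|(\Gamma_2 u)(t) - (\Gamma_2 v)(t)\|_\alpha \leq \frac{m(\alpha)}{\delta}\, Kk\, \|u-v\|_\infty.$$
Adding these and taking the supremum over $t \in \R$ produces $\|\Gamma u - \Gamma v\|_\infty \leq K\Theta\, \|u-v\|_\infty$ with
$$\Theta := k\Bigg[c(\alpha)\,\frac{2^{1-\alpha}}{\delta^{1-\alpha}}\,\Gamma(1-\alpha) + \frac{m(\alpha)}{\delta}\Bigg].$$
Taking $K$ small enough that $K\Theta < 1$ makes $\Gamma$ a strict contraction, and the Banach fixed-point theorem furnishes a unique $u \in PAP(\X_\alpha, \mu, \nu)$ with $u = \Gamma u$, which is precisely the unique doubly-weighted pseudo-almost periodic mild solution of Eq.~\eqref{2}.

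The argument carries no deep obstacle, because the structural difficulties---membership of $s \mapsto g(s,u(s))$ in $PAP(\X,\mu,\nu)$ via the composition Theorem \ref{toka}, translation invariance, and the decay estimates---have already been absorbed into Lemma \ref{ll2} and Proposition \ref{pes}. The step demanding the most care is the bookkeeping: the estimate must pass through the interpolation norm $\|\cdot\|_\alpha$ while (H.5) is stated in the ambient norm $\|\cdot\|$, so the embedding constant $k$ must be inserted at the correct point, and the factors $c(\alpha)$, $m(\alpha)$, $\delta$, and $\Gamma(1-\alpha)$ must be tracked faithfully to yield a threshold on $K$ that genuinely guarantees contraction.
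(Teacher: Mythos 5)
Your proposal is correct and follows essentially the same route as the paper: Lemma \ref{ll2} for the self-mapping property, the dichotomy estimates \eqref{eq1.1} and \eqref{eq2.1} together with (H.5) and the embedding constant $k$ for the contraction bound, and the Banach fixed-point theorem to conclude. Your constant $\Theta$ coincides with the paper's $C(\alpha,\delta) = k\,m(\alpha)\,\delta^{-1} + k\,c(\alpha)\,2^{1-\alpha}\,\Gamma(1-\alpha)\,\delta^{\alpha-1}$.
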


\proof Consider the nonlinear operator ${\mathbb M}$ defined on
$PAP(\X_\alpha, \mu, \nu)$ by
\begin{eqnarray*}
{\mathbb M} u(t)=
\int_{-\infty}^{t}U(t,s)P(s) g(s, u(s))ds 
-\int_{t}^{\infty}U_Q(t,s)Q(s) g(s,  u(s))ds
\end{eqnarray*}
for each $t \in \R$.

In view of Lemma \ref{ll2},
it follows that ${\mathbb M}$ maps $PAP(\X_\alpha, \mu, \nu)$ into itself.
To complete the proof one has to show that ${\mathbb M}$ has a
unique fixed-point.

If $v,w\in PAP(\X_\alpha, \mu, \nu)$, then

\begin{eqnarray}
\| \Gamma_1 (v)(t) - \Gamma_1(w)(t)\|_\alpha &\leq&
\int_{-\infty}^t
\|U(t,s)P(s)\left[g(s,v(s))-g(s,w(s))\right]\|_\alpha
ds
\nonumber \\
&\leq& \int_{-\infty}^t c(\alpha) (t-s)^{-\alpha}
e^{-\frac{\delta}{2} (t-s)}\|g(s,v(s))-g(s,w(s))\|
ds
\nonumber \\
&\leq& K c(\alpha) \int_{-\infty}^t (t-s)^{-\alpha}
e^{-\frac{\delta}{2} (t-s)}\|v(s)-w(s)\| ds
\nonumber \\
&\leq& k K c(\alpha) \int_{-\infty}^t (t-s)^{-\alpha}
e^{-\frac{\delta}{2} (t-s)}\|v(s)-w(s)\|_\alpha ds
\nonumber \\
&\leq& k K c(\alpha) 2^{1-\alpha}\,\Gamma(1-\alpha)
\delta^{\alpha-1} \|v - w\|_{\alpha, \infty}, \nonumber
\end{eqnarray}
and

\begin{eqnarray}
\| \Gamma_2 (v)(t) - \Gamma_2(w)(t)\|_\alpha &\leq&
\int_{t}^{\infty} \|U_Q(t,s)Q(s)
\left[g(s,v(s))-g(s,w(s))\right]\|_\alpha ds
\nonumber \\
&\leq& \int_{t}^{\infty} m(\alpha)
e^{\delta(t-s)}\|g(s,v(s))-g(s,w(s))\| ds
\nonumber \\
&\leq& \int_{t}^{\infty} m(\alpha) K
e^{\delta(t-s)}\|v(s)-w(s)\| ds
\nonumber \\
&\leq&  k m(\alpha) K \int_{t}^{\infty}
e^{\delta(t-s)}\|v(s)-w(s)\|_\alpha ds
\nonumber \\
&\leq& K k m(\alpha) \|v - w\|_{\alpha, \infty} \int_{t}^{+\infty} e^{\delta(t-s)} ds\nonumber \\
&=& K  k m(\alpha) \delta^{-1}\|v -
w\|_{\alpha, \infty},\nonumber
\end{eqnarray}
where $\displaystyle \left\|u\right\|_{\alpha, \infty} := \sup_{t\in \R} \left\|u(t)\right\|_\alpha.$

Combining previous approximations it follows that
$$\|\mathbb{M}v-\mathbb{M}w\|_{\infty, \alpha}\leq K C(\alpha, \delta)
\,.\,\|v -
    w\|_{\alpha, \infty},$$
where $C(\alpha, \delta) = k m(\alpha) \delta^{-1} + k c(\alpha) 2^{1-\alpha}\,\Gamma(1-\alpha)
\delta^{\alpha-1} >0$ is a constant, 
and hence if the Lipschitz $K$ is small enough, then Eq. (\ref{2}) has a unique solution, which obviously is its
only doubly-weighted pseudo-almost periodic mild solution.
\endproof



\bibliographystyle{srtnumbered}
\bibliography{mybib}


\end{document}